%
%
%

\documentclass[a4paper,dvips,11pt]{article}
\usepackage[latin1]{inputenc}
\usepackage{amsmath,amsthm,amsfonts,amssymb}
\usepackage{enumerate}
\usepackage{graphicx,psfrag}
\usepackage[ps2pdf,colorlinks]{hyperref}
\usepackage{pst-node,pstricks-add,pst-text,pst-3d,pst-tree}

\numberwithin{equation}{section}

\theoremstyle{plain}
\newtheorem{thm}{Theorem}[section]

\newtheorem{lem}[thm]{Lemma}
\newtheorem*{thm*}{Theorem}
\newtheorem*{lem*}{Lemma}
\newtheorem*{prop*}{Proposition}

\theoremstyle{remark}

\newtheorem{remark}[thm]{Remark}

\newtheorem*{remark*}{Remark}

\newcommand{\mailto}[1]{\href{mailto:#1}{\nolinkurl{#1}}}

\newcommand{\be}{\begin{eqnarray}}
\newcommand{\ee}{\end{eqnarray}}
\newcommand{\beq}{\begin{equation}}
\newcommand{\eeq}{\end{equation}}
\newcommand{\beqn}{\begin{equation*}}
\newcommand{\eeqn}{\end{equation*}}

\newcommand{\R}{\mathbb{R}}
\newcommand{\Z}{\mathbb{Z}}

\newcommand{\defas}{\mathrel{\raise.095ex\hbox{$:$}\mkern-4.2mu=}}
\newcommand{\defasr}{\mathrel{=\mkern-4.2mu\raise.095ex\hbox{$:$}}}

\DeclareMathAlphabet{\mathfat}{U}{bbold}{m}{n}

\newcommand\cF{{\mathcal F}}

\newcommand\cU{{\mathcal U}}

\newcommand{\pre}{\mathbb{P}}
\newcommand{\Ee}{\mathbb{E}}

\newcommand{\prq}{P_\omega}
\newcommand{\Eq}{E_\omega}
\newcommand{\Varq}{\operatorname{Var}_\omega}

\newcommand{\E}{E}
\newcommand{\pr}{P}

\newcommand{\deq}{\stackrel{d}{=}}

\newcommand{\reff}[1]{(\ref{#1})}

\newcommand{\floor}[1]{[ #1 ]}
\newcommand{\ceil}[1]{\lceil #1 \rceil}

\newcommand{\ainv}{a^\leftarrow}

\begin{document}

\title{A local limit theorem for a transient chaotic walk\\ in a frozen environment}

\author{
 Lasse Leskelä\thanks{
 Postal address:
 Department of Mathematics and Systems Analysis, Aalto University, PO Box 11100, 00076 Aalto,
 Finland.
 URL: \url{http://www.iki.fi/lsl/} \quad
 Email: \protect\mailto{lasse.leskela@iki.fi}}
 \and
 Mikko Stenlund\thanks{
 Postal address:
 Courant Institute of Mathematical Sciences, New York, NY 10012, USA.
 URL: \url{http://www.math.helsinki.fi/mathphys/mikko.html} \quad
 Email: \protect\mailto{mikko@cims.nyu.edu}}
}
\date{\today}

\maketitle

\begin{abstract}
This paper studies particle propagation in a one-dimensional inhomogeneous medium where the laws
of motion are generated by chaotic and deterministic local maps. Assuming that the particle's
initial location is random and uniformly distributed, this dynamical system can be reduced to a
random walk in a one-dimensional inhomogeneous environment with a forbidden direction. Our main
result is a local limit theorem which explains in detail why, in the long run, the random walk's
probability mass function does not converge to a Gaussian density, although the corresponding
limiting distribution over a coarser diffusive space scale is Gaussian.
\end{abstract}

\noindent {\bf Keywords:} random walk in random environment, quenched random walk, random media,
local limit theorem, extended dynamical system

\vspace{1ex}

\noindent {\bf AMS 2000 Subject Classification:} 60K37; 60F15; 37H99; 82C41; 82D30

\vspace{1ex}

\section{Introduction}
\label{sec:Introduction}

\subsection{A chaotic dynamical system}

This paper studies a particle moving in a continuous inhomogeneous medium which is composed of a
linear chain of cells modeled by the unit intervals $[k,k+1)$ of the positive real line. Each
interval $[k,k+1)$ is assigned a label $\omega_k$ and a map $U_{\omega_k}$ which determines the
dynamics of the particle as long as the particle remains in the interval. The sequence of labels
$\omega = (\omega_k)_{k\in\Z_+}$, called an \emph{environment}, is assumed to be either nonrandom,
or a realization of a random sequence that is frozen during the particle's lifetime.

We are interested in the case in which the local dynamical rules $U_{\omega_k}$ are chaotic in the
sense that the distance between two initially nearby particles grows at an exponential rate. More
concretely, we shall focus on a model where a particle located at $x_n \in [k,k+1)$ at time $n$
jumps to $x_{n+1} = k + U_{\omega_k}(x_n - k)$. Here $\omega_k \in (0,1)$ and $U_{\omega_k}$
is the piecewise affine map from $[0,1)$ onto $[0,2)$ such that $U_{\omega_k}[0,1-\omega_k) =
[0,1)$ and $U_{\omega_k}[1-\omega_k,1) = [1,2)$. The dynamical system generated by the local rules
is compactly expressed by $x_{n+1} = \cU_{\omega}(x_n)$, where the global map $\cU_\omega$ on the
positive real line is defined by
\begin{equation}
 \label{eq:GlobalMap}
 \cU_\omega(x) = \floor{x} + U_{\omega_{[x]}}(x - \floor{x}),
\end{equation}
and $\floor{x}$ denotes the integral part of $x$, see Figure~\ref{fig:themap}.

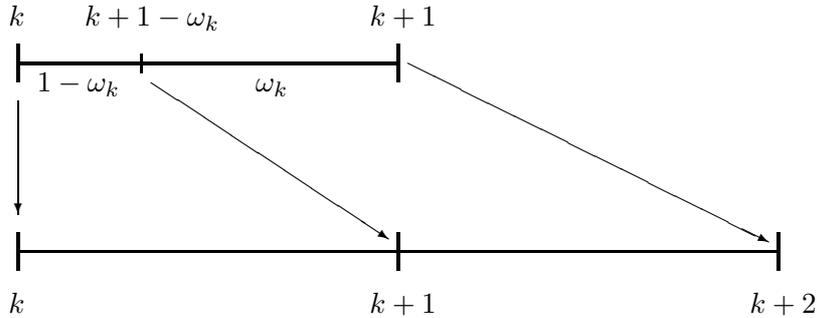
\begin{figure}[!ht]
\begin{center}
\setlength{\unitlength}{1.25cm}
\begin{picture}(8.5,4)
\thicklines
\put(0,3){\line(1,0){4}}
\put(0,2.8){\line(0,1){0.4}}
\put(4,2.8){\line(0,1){0.4}}
\put(1.3,2.9){\line(0,1){0.2}}
\put(-0.1,3.4){$k$}
\put(3.70,3.4){$k+1$}
\put(0.70,3.4){$k+1-\omega_k$}
\thinlines
\put(0,2.6){\vector(0,-1){1.2}}
\put(4.1,3){\vector(2,-1){3.8}}
\put(1.4,2.8){\vector(3,-2){2.5}}
\thicklines
\put(0,1){\line(1,0){8}}
\put(0,0.8){\line(0,1){0.4}}
\put(4,0.8){\line(0,1){0.4}}
\put(8,0.8){\line(0,1){0.4}}
\put(1.3,2.9){\line(0,1){0.2}}
\put(-0.1,0.35){$k$}
\put(3.70,0.35){$k+1$}
\put(7.70,0.35){$k+2$}
\put(0.2,2.7){$1-\omega_k$}
\put(2.5,2.7){$\omega_k$}
\end{picture}
\caption{The map $\cU_{\omega}$ acts piecewise affinely on the interval $[k,k+1)$.}
\label{fig:themap}
\end{center}
\end{figure}

The above model belongs to the realm of extended dynamical systems, a somewhat vaguely defined yet
highly active field of research (e.g.\ Chazottes and Fernandez~\cite{ChazottesFernandez}).
Telltale characteristics of such systems are a noncompact or high-dimensional phase space and the
lack of relevant finite invariant measures. Our principal motivation is to study the impact of
environment inhomogeneities on the long-term behavior of extended dynamical systems. Concrete
models include neural oscillator networks (Lin, Shea-Brown, and Young \cite{LinShea-BrownYoung})
and the Lorentz gas with randomly placed scatterers (Chernov and Dolgopyat
\cite{ChernovDolgopyat_ICM}; Cristadoro, Lenci, and Seri \cite{CristadoroLenciSeri}), to name a
few. In this paper, we shall restrict the analysis to the affine dynamical model
in~\reff{eq:GlobalMap}, to keep the presentation simple and clear.

\subsection{Random initial data}

Because the local maps $U_{\omega_k}$ are chaotic, predicting the particle's future location with
any useful accuracy over any reasonably long time horizon would require a precise knowledge of its
initial position --- a sheer impossibility in practice. Therefore, it is natural to take the
statistical point of view and study the stochastic process defined by
\begin{equation}
 \label{eq:DeterministicWalk}
 \begin{aligned}
  x_0     &\deq {\rm Uniform}[0,1), \\
  x_{n+1} &= \cU_\omega(x_n).
 \end{aligned}
\end{equation}

To analyze the time evolution of the above process, we must impose some regularity conditions on
the environment. In particular, those conditions guarantee ballistic motion, and one might guess that the distribution approaches Gaussian in the long run. To test this
hypothesis, we have plotted in Figure~\ref{fig:surprise} numerically computed histograms of $x_n$
at time $n=2^{13}$ in two frozen environments, using the intervals $[k,k+1)$ as bins. Rather
surprisingly, the histograms do not appear Gaussian. A similar phenomenon was recently observed by
Simula and Stenlund \cite{SimulaStenlund,SimulaStenlund2}.

\begin{figure}
\begin{minipage}[t]{\linewidth}
\includegraphics[width=\linewidth]{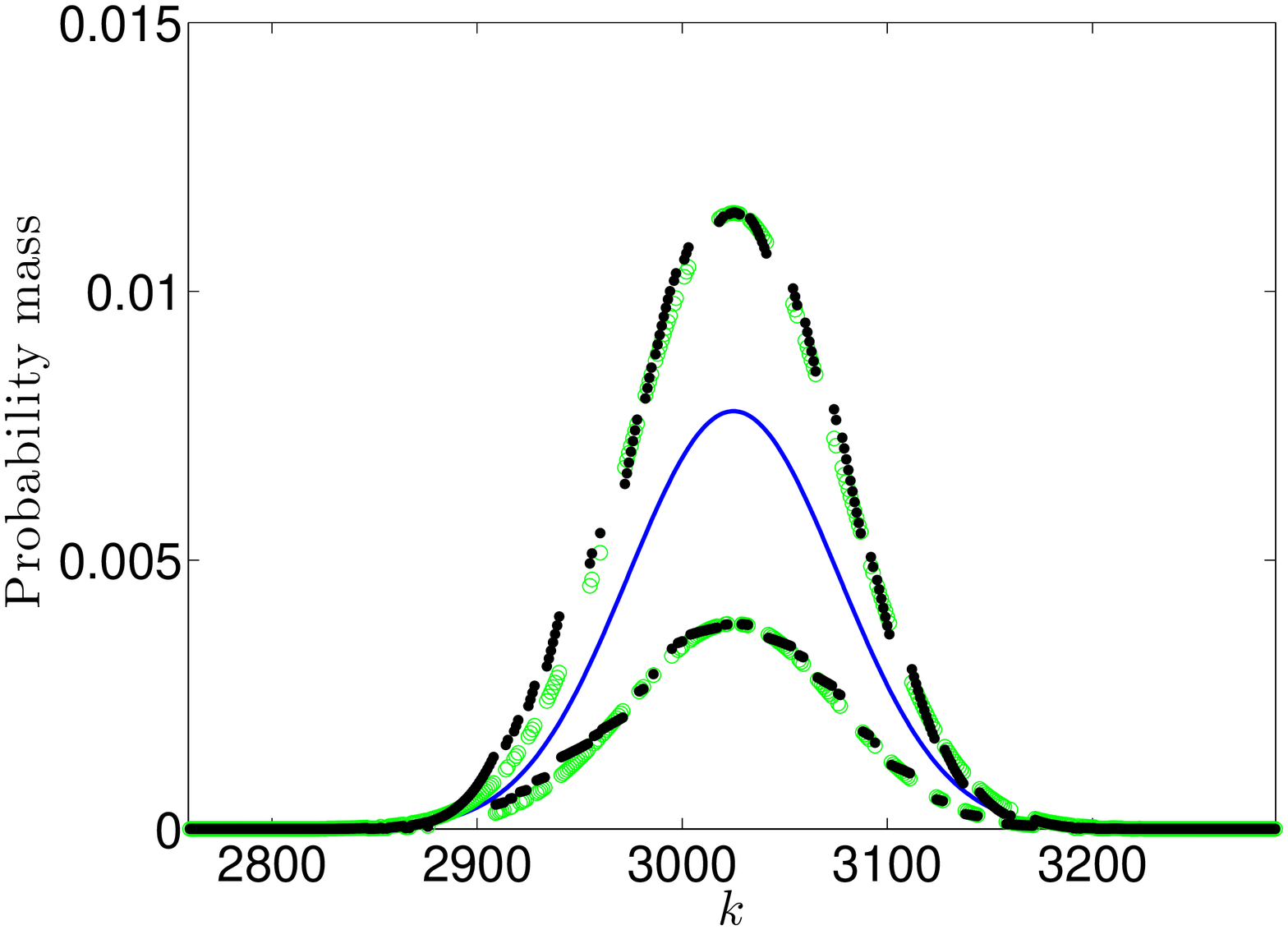}
\end{minipage}
\\
\begin{minipage}[t]{\linewidth}
\includegraphics[width=\linewidth]{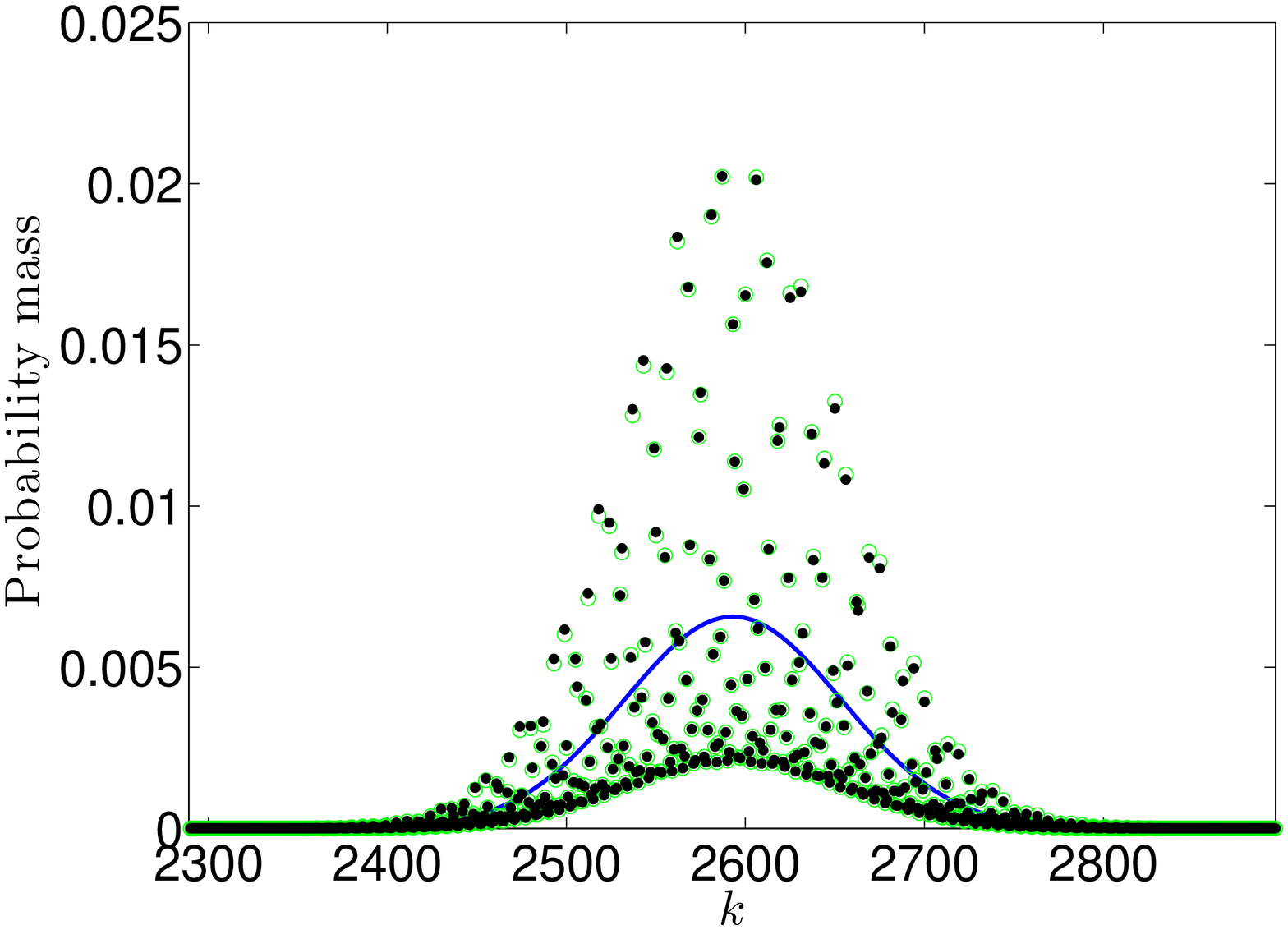}
\end{minipage}
\caption{Histograms of $x_n$ (black dots) at time $n=2^{13}$ in two frozen environments $\omega$.
Top: $\omega$ is a realization of a Markov chain with values in
$\left\{\frac14,\frac34\right\}$ and $\pre(\omega_{k+1} = \omega_{k}\,|\,\omega_k)=\frac45$.
Bottom: $\omega$ is nonrandom, with $\omega_k = \frac{11}{20} + \frac{9}{20} \sin k$.
The green circles are obtained by modulating the blue Gaussian density by the factor $\omega_k^{-1}/\mu$
appearing in Theorem~\ref{thm:LLT}.}
\label{fig:surprise}
\end{figure}

\subsection{Summary of main results}

The main contribution of the paper is to explain the emergence of the histograms in
Figure~\ref{fig:surprise}. This is accomplished by first reducing the continuum dynamical system
to a unidirectional random walk on the integers (Theorem~\ref{thm:MC}), and then deriving a local
limit law (Theorem~\ref{thm:LLT}) that completely explains the behavior observed in
Figure~\ref{fig:surprise}. As a byproduct, we also obtain a law of large numbers
(Theorem~\ref{thm:LLN}) and a central limit law (Theorem~\ref{thm:CLT}) for the walk. These limit
laws are valid for all frozen environments --- random or nonrandom --- which satisfy certain
statistical regularity properties. We also devote a separate section to the analysis of random
environments, where we show (Theorem~\ref{thm:Quenched}) that the three  aforementioned limit laws
are valid for almost all realizations of a stationary random environment under suitable moment and
mixing conditions. Because the random walk is
unidirectional (it never steps backwards), limit theorems for its hitting times are immediate
consequences of classical limit laws for independent random variables. Translating the limit
theorems of the hitting times into limit theorems of the walk location form the main task in
proving the results; see Section~\ref{sec:ProofLLTOutline} for a general outline of the proofs.

\subsection{Related work}

We shall discuss here only literature most closely related to transient one-dimensional random
walks in quenched random environments; for a broad picture of the theory of random walks in random
environments, see e.g.\ Bolthausen and Sznitman~\cite{BolthausenSznitman},
Sznitman~\cite{Sznitman}, and Zeitouni~\cite{Zeitouni}. Laws of large numbers and averaged central
limit theorems for random walks in random environments have been known already for a long time
(e.g.\ Solomon~\cite{Solomon_1975}; Kesten, Kozlov, and
Spitzer~\cite{Kesten_Kozlov_Spitzer_1975}), whereas the literature on quenched central limit
theorems is more recent. Buffet and Hannigan~\cite{Buffet_Hannigan_1991} proved a quenched central
limit theorem for a pure birth process in an independently scattered random environment under
moment conditions later relaxed by Horv{\'a}th and
Shao~\cite{Horvath_Shao_1994,Horvath_Shao_1995}; this model is a direct continuous-time analogue
of the random walk studied here. Quenched central limit theorems for more general one-dimensional
transient random walks were proved only very recently, independently by Goldsheid
\cite{Goldsheid2007} and Peterson \cite{PetersonThesis} (see also Alili \cite{Alili} for a result
concerning a special quasiperiodic environment). Rassoul-Agha and Sepp\"al\"ainen
\cite{RassoulAghaSeppalainen2006} obtained a similar result for multidimensional random walks with
a forbidden direction, which in the one-dimensional case corresponds to the unidirectional walk
analyzed in this paper. Dolgopyat, Keller, and Liverani \cite{DolgopyatKellerLiverani} have
obtained a quenched central limit theorem for environments changing in time and space.

Our approach differs from most earlier works in that we separately analyze the two degrees of
randomness involved in random walks in quenched random environments. In the first part, we extract
a set of statistical regularity properties for a given environment that are sufficient for proving
the limit laws, while treating the environment as nonrandom. In the second part, we derive
conditions for the probability distribution of the random environment that yield almost surely
regular realizations. A key result for the second part is a law of large numbers for the moving
averages of a stationary sequence (Lemma~\ref{lem:fast}), which is proved with the help of
Peligrad's extension \cite{Peligrad1985} of the Baum--Katz theorem~\cite{BaumKatz1965} (see
Bingham \cite{Bingham1986a} for a nice survey). The major advantage of this approach is a
clarified picture on how different sources of randomness affect the random walk's behavior in
quenched random environments.

Local limit theorems for random walks in homogeneous environments (e.g.\
Spitzer~\cite{Spitzer1976}) can be proved as simple consequences of Gnedenko's classical theorem
(e.g.\ \cite[Chapter 9]{GnedenkoKolmogorov1968} or \cite[Section 3.5]{Durrett2010}), because such
walks are just sums of independent random variables. A generalization for a periodic environment
was derived by Takenami \cite{Takenami2002}, and for a periodic graph recently by Kazami and
Uchiyama \cite{KazamiUchiyama}. In contrast, local limit theorems for random walks in aperiodic
inhomogeneous environments appear nonexistent. To the best of our knowledge,
Theorem~\ref{thm:LLT} and Theorem~\ref{thm:Quenched} are the first local limit laws concerning
transient random walks in aperiodic nonrandom or quenched random
environments. Although our analysis is restricted to a very special instance of a random walk, the model is still rich enough to capture
several interesting phenomena, such as the need for nonlinear centering and a non-Gaussian
modulating factor, and we believe that the results could serve as useful benchmarks when testing
hypotheses concerning more general random walks.

Regarding extended dynamical systems, we have found two earlier local limit theorems, both
corresponding to homogeneous environments. Sz{\'a}sz and Varj{\'u} \cite{SzaszVarju} have
considered Lorentz processes with periodic configurations of scatterers, while Bardet, Gou\"ezel,
and Keller~\cite{BardetGouezelKeller} study rather different type of systems: small (possibly
inhomogeneous) perturbations of weakly coupled, translation invariant, coupled map lattices; see
Nagaev~\cite{Nagaev1957} and Guivarc'h~\cite{Guivarc'h1984} for some of the original techniques.
Let us finally stress that for more classical, probability measure preserving, dynamical systems,
various types of limit theorems have been proved for many decades. Yet such systems, too, continue
to be studied vigorously, with important recent developments (e.g.\ Chazottes and Gou{\"e}zel
\cite{ChazottesGouezel}; Gou{\"e}zel~\cite{Gouezel2009,Gouezel2010}).

\subsection{Organization of the paper}
The rest of the paper is organized as follows. Section~\ref{sec:MainResults} presents the main
results. The proofs for nonrandom environments are given in Section~\ref{sec:ProofsNonrandom}, and
the proofs for quenched random environments in Section~\ref{sec:ProofsRandom}.
Section~\ref{sec:Conclusions} concludes the paper, and Appendix~\ref{sec:GeneralizedInverse}
contains basic facts on generalized inverses of increasing sequences.

\section{Main results}
\label{sec:MainResults}

\subsection{Representation as a random walk}
\label{sec:RandomWalk}

The distribution of the dynamical system~\reff{eq:DeterministicWalk} at any time instant can be
completely characterized in terms of the following simple unidirectional random walk
(discrete-time pure birth process) on the integers. Given an environment $\omega \in
(0,1)^{\Z_+}$, let $(X_n)_{n \in \Z_+}$ be a random walk in $\Z_+$ having the initial state $X_0 =
0$ and transitions
\begin{equation}
 \label{eq:RandomWalk}
 k \mapsto \left\{
 \begin{aligned}
   k,   &\quad \text{with probability} \ 1 - \omega_k, \\
   k+1, &\quad \text{with probability} \ \omega_k.
 \end{aligned}
 \right.
\end{equation}
We denote by $\prq$ the distribution of the walk in the path space $\Z_+^{\Z_+}$. Note that if the
environment $\omega$ is a realization of a random sequence, the process $(X_n)$ can be identified
as a random walk in a random environment, and the distribution $\prq$ is usually called the
\emph{quenched law} of the random walk. The expectation and variance with respect to $\prq$ are
denoted by $\Eq$ and $\Varq$, respectively. When presenting general facts in probability theory,
we write $\pr$ and $\E$ for the measure and expectation.

\begin{thm}
\label{thm:MC}
For any environment $\omega$, the value of the dynamical system~\reff{eq:DeterministicWalk} at any
time instant $n$ has the same distribution as $X_n + R$, where $(X_n)$ is the random walk in
$\Z_+$ defined by~\reff{eq:RandomWalk}, and $R$ is a uniformly distributed random variable in
$[0,1)$ independent of $X_n$.
\end{thm}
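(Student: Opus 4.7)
The plan is to prove the distributional identity $x_n \stackrel{d}{=} X_n + R$ by induction on $n$, using the stronger inductive hypothesis that $x_n$ has the law of $X_n + R_n$ where $R_n$ is uniform on $[0,1)$ and independent of $X_n$. Equivalently, I want to show that for every $n$ and every $k \in \Z_+$,
\begin{equation*}
\pr(x_n \in k + A) = \prq(X_n = k)\,\Leb(A) \qquad \text{for every Borel } A \subset [0,1),
\end{equation*}
which is precisely the statement that, conditionally on $\floor{x_n} = k$, the fractional part $x_n - k$ is uniform on $[0,1)$, with the marginal distribution of $\floor{x_n}$ coinciding with the law of $X_n$.

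The base case $n=0$ is immediate since $x_0 \sim \mathrm{Uniform}[0,1)$ and $X_0 = 0$. For the inductive step, the key observation is the Markov-like invariance provided by the affine structure of $U_{\omega_k}$. Given that the conditional law of $x_n$ on the event $\{\floor{x_n} = k\}$ is uniform on $[k,k+1)$, write $x_n = k + Y$ with $Y$ uniform on $[0,1)$. Using the piecewise affine definition of $U_{\omega_k}$, I would split according to the two branches: on $\{Y \in [0, 1-\omega_k)\}$, an event of probability $1-\omega_k$, the map sends $Y$ affinely onto $[0,1)$, so $x_{n+1} = k + U_{\omega_k}(Y)$ lies in $[k, k+1)$ and its fractional part is uniform on $[0,1)$; on $\{Y \in [1-\omega_k, 1)\}$, of probability $\omega_k$, the map sends $Y$ affinely onto $[1,2)$, so $x_{n+1} \in [k+1, k+2)$ and its fractional part is again uniform on $[0,1)$. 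The affine nature of each branch is what guarantees that uniformity on the full interval is pushed forward to uniformity on the target subinterval after conditioning, and this is the one routine fact the whole argument rests on.

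Combining the two cases yields that, conditionally on $\floor{x_n} = k$, the next integer part $\floor{x_{n+1}}$ equals $k$ with probability $1-\omega_k$ and $k+1$ with probability $\omega_k$, while the fractional part of $x_{n+1}$ is uniform on $[0,1)$ independent of $\floor{x_{n+1}}$. These are exactly the transition probabilities in \reff{eq:RandomWalk}. Summing over $k$ and using the inductive hypothesis for the marginal law of $\floor{x_n}$, I conclude that $\floor{x_{n+1}}$ has the same law as $X_{n+1}$, and that the fractional part is an independent uniform random variable, completing the induction.

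I do not foresee a genuine obstacle here, only one minor bookkeeping issue worth naming: strictly speaking, the theorem concerns only the one-dimensional marginal at a fixed time $n$, so I need not (and will not) claim that the sequence of fractional parts $(x_n - \floor{x_n})_{n \geq 0}$ is independent of the trajectory $(\floor{x_n})_{n \geq 0}$; at each step the uniform residue is freshly produced from the previous uniform residue via the affine branches, and only the marginal independence between $\floor{x_n}$ and $x_n - \floor{x_n}$ is used in and asserted by the statement.
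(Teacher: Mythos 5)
Your proposal is correct and follows essentially the same route as the paper: an induction whose inductive step is exactly the paper's Lemma~\ref{lem:T}, namely that the ``integer part independent of a uniform fractional part'' structure is preserved by $\cU_\omega$, with the integer part moving according to the transitions~\reff{eq:RandomWalk}; your explicit two-branch affine computation is just the ``simple calculation'' the paper summarizes in~\reff{eq:T}. No gaps.
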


\begin{proof}
The proof follows by induction and Lemma~\ref{lem:T} below.
\end{proof}

\begin{lem}
\label{lem:T}
Let $X$ be a positive random variable such that (i) $\floor{X}$ and $\{X\} = X - \floor{X}$ are
independent, and (ii) $\{X\}$ is uniformly distributed in $[0,1)$. Then the same is true for
$\cU_\omega X$, and moreover,
\begin{equation}
 \label{eq:MarkovProperty}
 \pr( \floor{\cU_\omega X} = l \, | \, \floor{X} = k  ) = \left\{
 \begin{aligned}
   1 - \omega_k,     &\quad \text{if}  \ l = k, \\
   \omega_k, &\quad \text{if} \ l = k+1,
 \end{aligned}
 \right.
\end{equation}
whenever $\pr(\floor{X} = k) > 0$.
\end{lem}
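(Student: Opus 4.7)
Write $K = \floor{X}$ and $F = \{X\}$, so by assumption $K$ and $F$ are independent with $F$ uniform on $[0,1)$. By the definition of $\cU_\omega$ in \reff{eq:GlobalMap}, on the event $\{K=k\}$ we have $\cU_\omega X = k + U_{\omega_k}(F)$, and the affine description of $U_{\omega_k}$ splits this into two cases: on $\{F<1-\omega_k\}$ the image $U_{\omega_k}(F)$ lies in $[0,1)$, so $\floor{\cU_\omega X}=k$ and $\{\cU_\omega X\} = U_{\omega_k}(F)$; on $\{F\ge 1-\omega_k\}$ the image lies in $[1,2)$, so $\floor{\cU_\omega X}=k+1$ and $\{\cU_\omega X\} = U_{\omega_k}(F) - 1$. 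Thus the event $\{\floor{\cU_\omega X}=k\}$ (resp.\ $\{\floor{\cU_\omega X}=k+1\}$) conditioned on $K=k$ has probability $1-\omega_k$ (resp.\ $\omega_k$), which is exactly \reff{eq:MarkovProperty}.

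The first step is then to verify that $\{\cU_\omega X\}$ remains uniform on $[0,1)$ given $K=k$, in each of the two cases. In the first case, $F$ conditioned on $F<1-\omega_k$ is uniform on $[0,1-\omega_k)$, and the affine bijection $U_{\omega_k}\colon [0,1-\omega_k)\to[0,1)$ maps this to the uniform distribution on $[0,1)$. In the second case, $F$ conditioned on $F\ge 1-\omega_k$ is uniform on $[1-\omega_k,1)$, and the affine bijection $U_{\omega_k}\colon[1-\omega_k,1)\to[1,2)$ sends it to the uniform distribution on $[1,2)$, whose fractional part is uniform on $[0,1)$. Hence for every $l\in\{k,k+1\}$ and every Borel $A\subset[0,1)$,
\begin{equation*}
 \pr\bigl(\floor{\cU_\omega X}=l,\ \{\cU_\omega X\}\in A \,\big|\, K=k\bigr)
 \;=\; \pr\bigl(\floor{\cU_\omega X}=l \,\big|\, K=k\bigr)\cdot \Leb(A).
\end{equation*}

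Summing over $k$ against the distribution of $K$ gives
\begin{equation*}
 \pr\bigl(\floor{\cU_\omega X}=l,\ \{\cU_\omega X\}\in A\bigr)
 \;=\; \Leb(A)\cdot\pr\bigl(\floor{\cU_\omega X}=l\bigr),
\end{equation*}
which simultaneously shows that $\{\cU_\omega X\}$ is uniform on $[0,1)$ and independent of $\floor{\cU_\omega X}$, establishing (i) and (ii) for $\cU_\omega X$. The formula \reff{eq:MarkovProperty} has already been read off from the case analysis.

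The argument is essentially a direct bookkeeping exercise, so there is no real obstacle beyond being careful about the conditional distributions; the only point that requires a moment of attention is verifying that affine rescaling of a uniform variable on a subinterval produces a uniform variable on the image, which is what makes the ``uniform fractional part'' property propagate. Theorem~\ref{thm:MC} will then follow by induction: $x_0$ satisfies the hypotheses of the lemma trivially (with $\floor{x_0}=0$ deterministic), and each application of $\cU_\omega$ preserves them while carrying out one step of the Markov chain~\reff{eq:RandomWalk}.
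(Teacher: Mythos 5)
Your proof is correct and follows essentially the same route as the paper: condition on $\floor{X}=k$, compute the joint law of the integer and fractional parts of $\cU_\omega X$ via the two affine branches of $U_{\omega_k}$ (the paper records this as its display \reff{eq:T}), and then remove the conditioning to get independence, uniformity, and \reff{eq:MarkovProperty}. The only cosmetic difference is that you phrase the computation through conditional distributions on the two branches and general Borel sets $A$, while the paper works with the distribution function $\{\cU_\omega X\}\le r$; the content is identical.
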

\begin{proof}
Denote $X = K + R$, where $K$ is a positive random integer independent of $R$, and $R$ is
uniformly distributed in $[0,1)$. Assume first that $K=k$ for some nonrandom integer $k$. Then
$\cU_\omega X = k + U_{\omega_k} R$. Moreover, a simple calculation based on the definition of
$U_{\omega_k}$ shows that for all $r \in [0,1)$,
\begin{equation}
 \label{eq:T}
 \pr([\cU_\omega X] = l, \, \{\cU_\omega X\} \le r) = \left\{
 \begin{aligned}
   (1-\omega_k) r, &\quad \text{if} \ l=k, \\
   \omega_k r,     &\quad \text{if} \ l=k+1, \\
   0,              &\quad \text{else}.
 \end{aligned}
 \right.
\end{equation}
Hence $\cU_\omega X$ satisfies (i), (ii), and~\reff{eq:MarkovProperty} in the case where
$\floor{X}$ is nonrandom. The general case follows by conditioning on $X$.
\end{proof}

\subsection{Limit theorems for regular frozen environments}
\label{sec:NonrandomEnvironments}
In this section we shall analyze the random walk $(X_n)$ in a fixed, sufficiently regular
environment, which may either be nonrandom, or a realization of a random sequence. More precisely,
we shall in general assume that the environment $\omega \in (0,1)^{\Z_+}$ is such that
\begin{equation}
  \label{eq:SojournGrowth}
  \omega_k^{-1} = O(k^\lambda)
\end{equation}
for some $0 \le \lambda < 1/2$; and
\begin{align}
 \label{eq:SojournMean}
 k^{-1} \sum_{j=0}^{k-1} \omega_j^{-1}             &= \mu      + o(k^{-\lambda} (\log k)^{-1/2}), \\
 \label{eq:SojournVariance}
 k^{-1} \sum_{j=0}^{k-1} (1-\omega_j)\omega_j^{-2} &= \sigma^2 + o(k^{-\lambda} (\log k)^{-1/2}),
\end{align}
for some constants $\mu > 1$ and $\sigma^2 > 0$. Moreover, we assume that
\begin{equation}
 \label{eq:SojournThirdMoments}
  k^{-1} \sum_{j=0}^{k-1} \omega_j^{-3} = O(1),
\end{equation}
and that the environmental moving averages satisfy
\begin{equation}
 \label{eq:EnvMovingAverages}
 \max_{j:|j| \le u b(k)}
 \left| \sum_{\ell=k}^{k+j-1}(\omega_\ell^{-1}-\mu) \right| = o(k^{1/2-\lambda})
\end{equation}
for all $u>0$, where $b(k) = (k\log k)^{1/2}$. (In the special case with $\lambda=0$ it suffices
to use $b(k) = k^{1/2}$). Concrete examples of environments that satisfy the above regularity
conditions shall be given in Section~\ref{sec:RandomEnvironments}.

The quantity $\omega_k^{-1}$ represents the mean sojourn time of the particle in the interval
$[k,k+1)$ (see Section~\ref{sec:HittingTimes} for more details). Therefore, the constant $\mu$
appearing in~\reff{eq:SojournMean} may be interpreted as the inverse of the particle's traveling
speed. The following result confirms this intuition.

\begin{thm}[Law of large numbers]
\label{thm:LLN}
For any environment $\omega$ satisfying~\reff{eq:SojournGrowth} -- \reff{eq:SojournMean},
\begin{equation}
 \label{eq:LLN}
 \prq( n^{-1} X_n \to \mu^{-1}) = 1.
\end{equation}
\end{thm}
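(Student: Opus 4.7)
The plan is to exploit the fact that the unidirectional walk $(X_n)$ has a very simple hitting-time structure: if $T_k$ denotes the first time $n$ with $X_n = k$, then $T_k = \sum_{j=0}^{k-1} \xi_j$, where, under $\prq$, the sojourn times $\xi_j$ are \emph{independent} and $\xi_j$ is geometric with parameter $\omega_j$, so $\Eq \xi_j = \omega_j^{-1}$ and $\Varq \xi_j = (1-\omega_j)\omega_j^{-2}$. Because the walk only steps forward one unit at a time, we have the deterministic inversion
\begin{equation*}
 T_{X_n} \le n < T_{X_n+1},
\end{equation*}
so a law of large numbers for $T_k/k$ immediately yields the desired LLN for $X_n/n$.

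First I would establish $k^{-1} T_k \to \mu$ a.s.\ under $\prq$. By \reff{eq:SojournMean}, $k^{-1}\Eq T_k \to \mu$, so it suffices to show that $k^{-1}(T_k - \Eq T_k) \to 0$ a.s. Since the $\xi_j$ are independent, I would invoke Kolmogorov's strong law for independent non-identically distributed summands, for which the key hypothesis is
\begin{equation*}
 \sum_{j\ge 1} \frac{\Varq \xi_j}{j^2}
 \;=\; \sum_{j\ge 1} \frac{(1-\omega_j)\omega_j^{-2}}{j^2} \;<\; \infty.
\end{equation*}
The growth assumption \reff{eq:SojournGrowth} gives $\omega_j^{-2} = O(j^{2\lambda})$, so each term is $O(j^{2\lambda - 2})$, and summability follows from $\lambda < 1/2$. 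This is the only place in the proof where the bound $\lambda < 1/2$ is used in an essential way (though a gentler condition would suffice for the LLN alone).

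Next I would transfer the asymptotics from $T_k$ to $X_n$. Since each $\omega_k \in (0,1)$ is strictly positive, $T_k < \infty$ a.s.\ for every $k$, hence $X_n \to \infty$ a.s. Using the sandwich $T_{X_n}/(X_n+1) \le n/(X_n+1) \le T_{X_n+1}/(X_n+1)$ (and a symmetric bound below), together with $T_k/k \to \mu$ a.s.\ applied along the random sequence $k = X_n \to \infty$, I conclude that $n/X_n \to \mu$ a.s., i.e.\ $n^{-1} X_n \to \mu^{-1}$ a.s.

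I do not expect any real obstacle: the whole argument is the classical renewal-style inversion of a strong law for partial sums, and all the ingredients are either part of the assumptions on $\omega$ or standard consequences of independence under $\prq$. The only calculation requiring slight care is verifying Kolmogorov's variance criterion, which is where \reff{eq:SojournGrowth} is invoked.
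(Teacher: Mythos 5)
Your proposal is correct and follows essentially the same route as the paper: Kolmogorov's variance criterion (using \reff{eq:SojournGrowth} and $\lambda<1/2$) gives $k^{-1}T_k\to\mu$ almost surely, with the mean identified via \reff{eq:SojournMean}, and the law for $X_n$ is then obtained by inverting the hitting times. The only cosmetic difference is that you carry out the renewal-type sandwich $T_{X_n}\le n< T_{X_n+1}$ directly, whereas the paper packages the same inversion step into Lemma~\ref{lem:InverseMuLimit} on generalized inverses.
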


The main result of the paper is the following limit theorem for the random walk $(X_n)$ defined
by~\reff{eq:RandomWalk}, or alternatively (by virtue of Theorem~\ref{thm:MC}), for the dynamical
system defined by~\reff{eq:DeterministicWalk}.

\begin{thm}[Local limit theorem]\label{thm:LLT}
For any environment $\omega$ satisfying~\reff{eq:SojournGrowth} -- \reff{eq:EnvMovingAverages},
\begin{equation}
 \label{eq:LLT}
 \prq(X_n=k)
 \,=\,  \frac{\omega_k^{-1}}{\mu} \cdot \frac{1}{\sqrt{2\pi \tilde\sigma^2 n}} \, e^{-\frac{(k - k_n^\omega)^2}{2 \tilde\sigma^2 n}}
 \,+\, o(n^{-1/2}),
\end{equation}
uniformly with respect to $k \ge 0$, where $\tilde\sigma^2 = \sigma^2/\mu^3$, and the centering factors
$k_n^\omega$ are given by
\begin{equation}
 \label{eq:Centering}
 k_n^\omega = \min \left\{ k \ge 0: \sum_{j=0}^{k-1} \omega_j^{-1} \ge n \right\}.
\end{equation}
\end{thm}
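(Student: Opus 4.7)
The plan is to exploit the duality between the walk's location and its hitting times $T_k = \min\{n\geq 0 : X_n=k\}$, for which $\{X_n\geq k\}=\{T_k\leq n\}$. Under $\prq$ the sojourn times $\tau_j = T_{j+1}-T_j$ are independent geometrics with success probability $\omega_j$, so $T_k = \sum_{j=0}^{k-1}\tau_j$ is a sum of independent integer-valued variables with $\Eq\tau_j=\omega_j^{-1}$, $\Varq\tau_j=(1-\omega_j)\omega_j^{-2}$, and centered third moment $O(\omega_j^{-3})$. Writing $m_k=\sum_{j<k}\omega_j^{-1}$ and $v_k^2=\sum_{j<k}(1-\omega_j)\omega_j^{-2}$, the hypotheses~\reff{eq:SojournMean}--\reff{eq:SojournThirdMoments} yield $m_k=\mu k+o(k^{1-\lambda}(\log k)^{-1/2})$, $v_k^2=\sigma^2 k+o(k^{1-\lambda}(\log k)^{-1/2})$, and $\sum_{j<k}\Eq|\tau_j-\Eq\tau_j|^3=O(k)$, which is exactly what is needed for a local limit theorem for $T_k$.

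Because the walk is unidirectional, conditioning on $\{T_k=m\}$ gives the exact identity
\begin{equation*}
 \prq(X_n=k) \,=\, \sum_{m=0}^{n}\prq(T_k=m)\,(1-\omega_k)^{n-m},
\end{equation*}
since $X_n=k$ iff the walker arrives at $k$ at some time $m\leq n$ and then stays for the remaining $n-m$ steps. The core step is a \emph{lattice local limit theorem} for the inhomogeneous sum $T_k$, obtained by Fourier analysis of $\prod_{j<k}\widehat{\tau}_j(t)$: under our moment hypotheses one gets
\begin{equation*}
 \prq(T_k=m) \,=\, \frac{1}{\sqrt{2\pi v_k^2}}\,e^{-(m-m_k)^2/(2v_k^2)} + r_k(m)
\end{equation*}
with $r_k$ small enough that its convolution against the geometric kernel $(1-\omega_k)^{n-m}$ (of total mass $\omega_k^{-1}=O(k^\lambda)$) is $o(n^{-1/2})$; here the assumption $\lambda<1/2$ is critical. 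Given this LLT, because the Gaussian density varies by only a factor $1+o(1)$ across the window of length $O(\omega_k^{-1}\log n)$ on which $(1-\omega_k)^{n-m}$ is non-negligible, one may factor the Gaussian out at $m=n$ and use $\sum_{j\geq 0}(1-\omega_k)^j=\omega_k^{-1}$ to obtain
\begin{equation*}
 \prq(X_n=k) \,=\, \frac{\omega_k^{-1}}{\sqrt{2\pi v_k^2}}\,e^{-(n-m_k)^2/(2v_k^2)} + o(n^{-1/2}).
\end{equation*}

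To match the form~\reff{eq:LLT}, I restrict to the Gaussian window $|k-k_n^\omega|\leq u\,b(k_n^\omega)$. There $n-m_k=m_{k_n^\omega}-m_k+O(\omega_{k_n^\omega-1}^{-1})$, and the moving-averages hypothesis~\reff{eq:EnvMovingAverages} gives $n-m_k=\mu(k_n^\omega-k)+o(n^{1/2-\lambda})$, while~\reff{eq:SojournVariance} together with $k_n^\omega\sim n/\mu$ gives $v_k^2=(\sigma^2 n/\mu)(1+o(1))$. Substituting converts the prefactor to $\omega_k^{-1}/(\mu\sqrt{2\pi\tilde\sigma^2 n})$ and the exponent to $-(k-k_n^\omega)^2/(2\tilde\sigma^2 n)$, uniformly on the window: since $(k-k_n^\omega)^2\leq u^2\,n\log n$, the error inside the exponent contributes only a multiplicative $1+o(1)$. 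Outside the window, Bernstein-type deviation bounds for $T_k-m_k$ (using~\reff{eq:SojournThirdMoments} and the exponential tails of the $\tau_j$) show $\prq(X_n=k)$ is super-polynomially small in $n$, hence trivially $o(n^{-1/2})$ once $u$ is chosen large.

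The principal obstacle is the quantitative lattice local limit theorem for $T_k$ with an error that survives the convolution against the geometric kernel. Near $t=0$ a Taylor expansion of $\prod_{j<k}\widehat{\tau}_j(t)$ produces the Gaussian contribution, but on intermediate frequency scales one must exploit the averaged third-moment bound~\reff{eq:SojournThirdMoments} and the growth restriction $\lambda<1/2$ to accumulate enough decay, since individual factors $\widehat{\tau}_j$ can remain close to $1$ on windows of size $\omega_j$ that shrink with $j$.
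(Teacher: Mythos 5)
Your overall architecture (duality with the hitting times $T_k$, an LLT for the inhomogeneous sum $T_k$, a window/off-window analysis using \reff{eq:EnvMovingAverages} to pass from the time-Gaussian to the space-Gaussian) parallels the paper, but the step where you dispose of the geometric kernel has a genuine gap. You claim that, because the Gaussian $f_k(m)=(2\pi v_k^2)^{-1/2}e^{-(m-m_k)^2/(2v_k^2)}$ varies by a factor $1+o(1)$ over the window where $(1-\omega_k)^{n-m}$ lives, you may ``factor the Gaussian out at $m=n$'' and get $\prq(X_n=k)=\omega_k^{-1}f_k(n)+o(n^{-1/2})$. A multiplicative error $1+o(1)$ is not enough here, because the main term itself has size $\omega_k^{-1}n^{-1/2}$, which can be much larger than $n^{-1/2}$: the additive error of the replacement is, to first order, $|f_k'(n)|\sum_{j\ge0}j(1-\omega_k)^j\asymp \frac{|n-m_k|}{v_k^2}f_k(n)\,\omega_k^{-2}$, which for $|n-m_k|\asymp v_k$ and $k\asymp n$ is of order $k^{-1}\omega_k^{-2}$. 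Under \reff{eq:SojournGrowth} alone this is only $O(n^{2\lambda-1})$, which fails to be $o(n^{-1/2})$ for $\lambda\in[1/4,1/2)$; the same issue appears in the fact that the convolution's natural center is $m_k+(\omega_k^{-1}-1)\approx m_{k+1}$ rather than $m_k$, a shift that can be polynomially large in $k$. So ``$\lambda<1/2$ is critical'' is not the right accounting for this step: with your justification the threshold is $\lambda<1/4$. The step can be rescued, but only by an observation you never make, namely that \reff{eq:EnvMovingAverages} applied with $j=1$ forces $\omega_k^{-1}=\mu+o(k^{1/2-\lambda})$, which caps the kernel's mass and mean on the relevant sites and restores the $o(n^{-1/2})$ bound uniformly in $k$.

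The paper avoids this difficulty altogether: by the memoryless property $\prq(\tau_k>m)=\omega_k^{-1}\prq(\tau_k=m+1)$, your convolution identity collapses exactly, $\sum_{m\le n}\prq(T_k=m)(1-\omega_k)^{n-m}=\omega_k^{-1}\prq(T_{k+1}=n+1)$ (Lemma~\ref{lem:Space2Time}), so no smoothing or factoring argument is needed at all; all errors are then tracked with the weight $k^\lambda$ precisely so that the unbounded factor $\omega_k^{-1}$ can be absorbed, which is the sharp bookkeeping your ``$1+o(1)$'' shortcuts skip (the same caution applies, though less seriously, to your on-window substitution of $v_k^2$ by $\sigma^2 n/\mu$, where the rates in \reff{eq:SojournMean}--\reff{eq:SojournVariance} must beat an extra $k^\lambda$). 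I would either adopt the exact identity, or explicitly invoke the $j=1$ consequence of \reff{eq:EnvMovingAverages} and redo the convolution estimate quantitatively; as written, the argument does not cover the full range $\lambda<1/2$ claimed by the theorem.
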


Two features in Theorem~\ref{thm:LLT}, which distinguish it from classical limit theorems,
call for special attention. First, the centering factors $k_n^\omega$ depend on the environment,
and are in general nonlinear functions of $n$. Second, the modulating factor $\omega_k^{-1}/\mu$ in
\reff{eq:LLT} causes the asymptotic shape of the probability mass function of $X_n$ to be
non-Gaussian. This modulating factor explains the behavior observed in Figure~\ref{fig:surprise}.

In contrast, when looking at the probability distribution of the walk over a coarser diffusive
space scale, the non-Gaussian modulating factor in Theorem~\ref{thm:LLT} averages out
asymptotically, and we end up with a standard Gaussian limiting distribution.

\begin{thm}[Central limit theorem]
\label{thm:CLT}
For any environment $\omega$ satisfying~\reff{eq:SojournGrowth} -- \reff{eq:EnvMovingAverages} for
some constants $\mu>1$ and $\sigma^2>0$,
\begin{equation}
 \label{eq:CLT}
 \prq \! \left( \frac{X_n-k_n^\omega}{\tilde\sigma \sqrt{n}} \le x \right)
 \to \frac{1}{\sqrt{2\pi}} \int_{-\infty}^x  e^{-t^2/2} \, dt
\end{equation}
for all $x \in \R$, where $\tilde\sigma^2 = \sigma^2/\mu^3$, and $k_n^\omega$ are given
by~\reff{eq:Centering}.
\end{thm}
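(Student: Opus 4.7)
The plan is to exploit the unidirectionality of $(X_n)$ and reduce the CLT for positions to a CLT for hitting times. Introduce $T_k = \min\{n : X_n = k\}$; then $T_k = \sum_{j=0}^{k-1} \xi_j$, where under $\prq$ the $\xi_j$ are independent with $\xi_j$ geometric of success probability $\omega_j$, so $\Eq \xi_j = \omega_j^{-1}$ and $\Varq \xi_j = (1-\omega_j)\omega_j^{-2}$. Writing $a_k \defas \Eq T_k$ and $s_k^2 \defas \Varq T_k$, hypotheses \reff{eq:SojournMean} and \reff{eq:SojournVariance} yield $a_k = \mu k + o(k^{1/2-\lambda}(\log k)^{-1/2})$ and $s_k^2 = \sigma^2 k + o(k^{1/2-\lambda}(\log k)^{-1/2})$. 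The duality $\{X_n \ge k\} = \{T_k \le n\}$ will then transfer a CLT for $T_k$ back to $X_n$.

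For the hitting-time CLT I would apply the Lindeberg--Lyapunov theorem for independent, non-identically distributed summands. A direct computation gives $\Eq|\xi_j-\Eq\xi_j|^3 = O(\omega_j^{-3})$, so hypothesis \reff{eq:SojournThirdMoments} yields $\sum_{j=0}^{k-1}\Eq|\xi_j-\Eq\xi_j|^3 = O(k)$, while $s_k^3 \sim \sigma^3 k^{3/2}$. The Lyapunov ratio is thus $O(k^{-1/2}) \to 0$, and $(T_k - a_k)/s_k \dto N(0,1)$ as $k \to \infty$.

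To invert, fix $x \in \R$ and set $k = k_n^\omega + \ceil{x\tilde\sigma\sqrt{n}}$. By Theorem~\ref{thm:LLN} (or directly by \reff{eq:SojournMean} and the definition of $k_n^\omega$), $k_n^\omega/n \to 1/\mu$, so $k/n \to 1/\mu$ and $s_k \sim \sigma(n/\mu)^{1/2}$. The duality yields
\begin{equation*}
\prq(X_n < k) \,=\, \prq(T_k > n) \,=\, \prq\!\left( \frac{T_k - a_k}{s_k} > \frac{n - a_k}{s_k} \right),
\end{equation*}
so it suffices to show $(n-a_k)/s_k \to -x$. Decompose
\begin{equation*}
 a_k - n \,=\, \Bigl(\sum_{j=0}^{k_n^\omega - 1}\omega_j^{-1} - n\Bigr) \,+\, \mu(k - k_n^\omega) \,+\, \sum_{j=k_n^\omega}^{k-1}(\omega_j^{-1}-\mu).
\end{equation*}
The first bracket lies in $[0,\omega_{k_n^\omega-1}^{-1}] = O(n^\lambda)$ by the definition of $k_n^\omega$ and \reff{eq:SojournGrowth}, hence is $o(\sqrt{n})$. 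The middle term equals $\mu x\tilde\sigma\sqrt{n}+O(1)$. Finally, $|k-k_n^\omega| = O(\sqrt{n}) \le u\, b(k_n^\omega)$ for any $u>0$ and large $n$, so \reff{eq:EnvMovingAverages} renders the oscillatory remainder $o((k_n^\omega)^{1/2-\lambda}) = o(\sqrt{n})$. Combining these with $\tilde\sigma = \sigma\mu^{-3/2}$ gives $(n-a_k)/s_k \to -x$, so $\prq(X_n < k) \to 1-\Phi(-x)=\Phi(x)$. Since $X_n < k$ and $X_n \le k_n^\omega + x\tilde\sigma\sqrt{n}$ differ by at most one lattice unit, \reff{eq:CLT} follows.

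The main obstacle is the uniform control of $a_k - n$ as $k$ sweeps a $\sqrt{n}$-window around $k_n^\omega$: this is exactly what the moving-average hypothesis \reff{eq:EnvMovingAverages} is engineered to supply, the scale $b(k) = (k\log k)^{1/2}$ comfortably absorbing $O(\sqrt{n})$-sized shifts when $\lambda>0$, while the sharper $b(k)=k^{1/2}$ handles the borderline $\lambda=0$ case. Once this is granted, the remaining steps are routine: the hitting-time/position duality is immediate from the unidirectionality of the walk, and the Lyapunov condition follows straightforwardly from \reff{eq:SojournThirdMoments}.
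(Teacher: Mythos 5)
Your proof is correct, but it takes a genuinely different route from the paper. The paper obtains Theorem~\ref{thm:CLT} as a corollary of the local limit theorem: it writes the probability as $\sum_{k\in I_n}\prq(X_n=k)$, substitutes the modulated Gaussian from~\reff{eq:LLT}, converts the sum to an integral via~\reff{eq:DiscreteIntegral}, and then invokes Lemma~\ref{lem:weak_conv} (which is where~\reff{eq:EnvMovingAverages} enters) to show that the non-Gaussian factor $\omega_k^{-1}/\mu$ averages out at the diffusive scale. You instead bypass the LLT entirely: a Lyapunov CLT for the hitting times $T_k$ (using~\reff{eq:SojournVariance} and~\reff{eq:SojournThirdMoments}), followed by the monotone duality $\{X_n\ge k\}=\{T_k\le n\}$, with~\reff{eq:EnvMovingAverages} used directly to control $a_k-n$ over the $\sqrt n$-window around $k_n^\omega$ --- exactly the decomposition the paper itself uses inside Lemma~\ref{lem:gh}, but deployed at the level of distribution functions rather than densities. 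Your route is more elementary and self-contained (it needs none of Lemmas~\ref{lem:LLTHittingTimes}--\ref{lem:ExponentialBound}), and it is the classical hitting-time inversion for transient walks; what it does not give, and what the paper's route buys, is the explanation of how the local modulation $\omega_k^{-1}/\mu$ disappears upon coarse-graining, which is the conceptual point of stating Theorems~\ref{thm:LLT} and~\ref{thm:CLT} side by side. Two minor points to tidy: the consequences of~\reff{eq:SojournMean}--\reff{eq:SojournVariance} are $a_k=\mu k+o(k^{1-\lambda}(\log k)^{-1/2})$ and $s_k^2=\sigma^2k+o(k^{1-\lambda}(\log k)^{-1/2})$, not with exponent $1/2-\lambda$ as you wrote (harmless, since your argument only uses the exact decomposition of $a_k-n$ and $s_k^2\sim\sigma^2k$); and the final one-lattice-unit discrepancy is cleanest handled not by a negligibility claim for a single site (which without the LLT you have not proved) but by running your computation directly with the integer threshold $k=k_n^\omega+\floor{x\tilde\sigma\sqrt n}+1$, for which $(n-a_k)/s_k\to -x$ holds just the same.
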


\begin{remark}\label{rem:LLT_cen1}
The centering factor $k_n^\omega$ may be identified as a generalized inverse
(Appendix~\ref{sec:GeneralizedInverse}) of the function $k \mapsto \Eq T_k$, where $T_k$ denotes
the hitting time of the walk into site $k$ (Section~\ref{sec:HittingTimes}). A quick inspection of
the proofs in Section~\ref{sec:ProofsNonrandom} shows that Theorem~\ref{thm:LLT} remains true if
$k_n^\omega$ is replaced by $\ell_n^\omega = k_n^\omega + o(n^{1/2-\lambda})$, where $\lambda$ is
as in~\reff{eq:SojournGrowth}; use the inequality $|e^{-x^2}-e^{-y^2}| \le |x-y|$ for $k\leq n$
and the proof of Lemma~\ref{lem:ExponentialBound} for $k>n$. Moreover, Theorem~\ref{thm:CLT}
remains true if $k_n^\omega$ is replaced by $\ell_n^\omega = k_n^\omega + o(n^{1/2})$.
\end{remark}


\subsection{Limit theorems for quenched random environments}
\label{sec:RandomEnvironments}

In this section we assume that the environment $\omega$ is a realization of a stationary random
sequence in $(0,1)^{\Z_+}$, and denote by $\pre$ its distribution on $(0,1)^{\Z_+}$. The
expectation with respect to $\pre$ is denoted by $\Ee$. We shall assume that
%
\begin{equation}
 \label{eq:EnvMoments}
 \Ee (\omega_0^{-1})^q < \infty
 \quad \text{for some $q>5$.}
\end{equation}
To guarantee that the environmental averages converge to their mean values rapidly
enough, we assume that
\begin{equation}
 \label{eq:EnvMixing}
 \sum_{k \ge 1} \phi^{1/2}(k) < \infty,
\end{equation}
where the mixing coefficients $\phi(k)$ are defined by
\begin{equation}
 \label{eq:MixingCoefficients}
 \phi(k) = \sup_{m} \sup_{A \in \cF_0^m, B \in \cF_{m+k}^\infty, \, \pre(A)>0} |\pre(B | A) - \pre(B)|,
\end{equation}
and where $\cF_0^m = \sigma(\omega_j, j \le m)$ and $\cF_m^\infty = \sigma(\omega_j, j \ge m)$
(e.g.\ Bradley \cite{Bradley2005}).

The following result summarizes three limit theorems for the quenched random walk in a stationary
strongly mixing random environment.

\begin{thm}
\label{thm:Quenched}
The law of large numbers~\reff{eq:LLN}, the local limit law~\reff{eq:LLT}, and the central limit
law~\reff{eq:CLT} are valid with $\mu = \Ee \omega_0^{-1}$ and $\sigma^2 = \Ee
(1-\omega_0)\omega_0^{-2}$ for almost every realization of a stationary random environment
satisfying~\reff{eq:EnvMoments} and~\reff{eq:EnvMixing}.
\end{thm}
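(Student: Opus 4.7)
The plan is to reduce Theorem~\ref{thm:Quenched} to the three deterministic limit theorems \ref{thm:LLN}, \ref{thm:LLT}, and \ref{thm:CLT} by showing that, under the moment condition \reff{eq:EnvMoments} and the mixing condition \reff{eq:EnvMixing}, the pathwise regularity requirements \reff{eq:SojournGrowth}--\reff{eq:EnvMovingAverages} hold for $\pre$-almost every realization $\omega$, with $\mu = \Ee\omega_0^{-1}$ and $\sigma^2 = \Ee(1-\omega_0)\omega_0^{-2}$. Once this verification is in hand, applying those three theorems pointwise on the resulting full-measure set of environments yields the quenched versions of \reff{eq:LLN}, \reff{eq:LLT}, and \reff{eq:CLT}.

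The polynomial growth bound \reff{eq:SojournGrowth} follows from Borel--Cantelli: Markov's inequality combined with \reff{eq:EnvMoments} gives $\pre(\omega_k^{-1} > k^\lambda) \le \Ee(\omega_0^{-1})^q \cdot k^{-\lambda q}$, which is summable whenever $\lambda q > 1$. Since $q > 5$, we may fix some $\lambda \in (1/q,\, 1/5) \subset (0,\, 1/2)$ once and for all. The Cesàro bound \reff{eq:SojournThirdMoments} then follows from Birkhoff's ergodic theorem applied to the integrable random variable $\omega_0^{-3}$ (integrable because $q > 3$), using that $\phi$-mixing with $\phi(k) \to 0$ implies ergodicity of the shift, so the Cesàro limit equals $\Ee\omega_0^{-3} < \infty$.

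The core of the argument is to verify the three rate conditions \reff{eq:SojournMean}, \reff{eq:SojournVariance}, and the uniform-in-window bound \reff{eq:EnvMovingAverages}. Each of them can be phrased as a strong law with Baum--Katz-type rates for the partial sums of a stationary $\phi$-mixing sequence $(f(\omega_j))_{j \ge 0}$, with $f(\omega) = \omega^{-1}$ in one case and $f(\omega) = (1-\omega)\omega^{-2}$ in the other; in both cases the relevant absolute $q$-th moment is dominated by $\Ee \omega_0^{-q}$, finite by \reff{eq:EnvMoments}. This is exactly what Lemma~\ref{lem:fast} is designed to deliver: it is a moving-averages SLLN with rates, obtained from Peligrad's \cite{Peligrad1985} $\phi$-mixing extension of the Baum--Katz theorem, and its hypotheses ($q$-th moment with $q > 5$ and $\sum_k \phi^{1/2}(k) < \infty$) coincide with \reff{eq:EnvMoments} and \reff{eq:EnvMixing}. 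Applying it twice, once to each of the two sequences, produces the three conditions on a $\pre$-full-measure set.

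The main obstacle is condition \reff{eq:EnvMovingAverages}, because it controls a \emph{supremum} of partial sums over a window of length $b(k) = (k \log k)^{1/2}$ attached to an arbitrary base point $k$, rather than the plain partial sum over $[0,k)$. For $\phi$-mixing sequences, such uniform-in-window rates require combining a maximal inequality with Baum--Katz-type tail summability on dyadic sub-scales; controlling how the moment exponent interacts with the permissible range $\lambda < 1/2$ is exactly what dictates the threshold $q > 5$ in \reff{eq:EnvMoments}. Once \reff{eq:SojournGrowth}--\reff{eq:EnvMovingAverages} hold almost surely, Theorems \ref{thm:LLN}, \ref{thm:LLT}, and \ref{thm:CLT} immediately yield Theorem~\ref{thm:Quenched}.
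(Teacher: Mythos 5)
Your overall strategy is exactly the paper's: verify that the deterministic regularity conditions \reff{eq:SojournGrowth}--\reff{eq:EnvMovingAverages} hold $\pre$-almost surely with $\mu=\Ee\omega_0^{-1}$, $\sigma^2=\Ee(1-\omega_0)\omega_0^{-2}$, and then invoke Theorems~\ref{thm:LLN}, \ref{thm:LLT}, \ref{thm:CLT} pathwise. Your treatment of \reff{eq:SojournGrowth} (Borel--Cantelli, in the paper packaged as Lemma~\ref{lem:GrowthRate}, which gives $\lambda=1/q$) and of \reff{eq:SojournThirdMoments} (ergodic theorem) matches the paper. Where you diverge is \reff{eq:SojournMean} and \reff{eq:SojournVariance}: the paper does \emph{not} use Lemma~\ref{lem:fast} for these, but the law of the iterated logarithm for stationary $\phi$-mixing sequences (available since $m_k=\omega_k^{-1}$ and $s_k^2=(1-\omega_k)\omega_k^{-2}$ have finite second moments under $q>5$), which delivers the Cesàro rate $O(k^{-1/2}(\log\log k)^{1/2})=o(k^{-\lambda}(\log k)^{-1/2})$ directly; Lemma~\ref{lem:fast} is reserved for the windowed condition \reff{eq:EnvMovingAverages}. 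Your Baum--Katz route for the Cesàro conditions can be made to work, but note that Lemma~\ref{lem:fast} as stated controls maxima of sums anchored at the point $k$, not the plain partial sum over $[0,k)$; to extract the latter you must either apply Peligrad's theorem directly to $S_k$ or take $s=1$ and chain over dyadic blocks, neither of which you spell out. The LIL route is simply cleaner here.

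Two concrete inaccuracies need fixing. First, your claim that for $f(\omega)=(1-\omega)\omega^{-2}$ ``the relevant absolute $q$-th moment is dominated by $\Ee\omega_0^{-q}$'' is false: $\Ee|f(\omega_0)|^q\le\Ee\omega_0^{-2q}$ is not assumed finite; what \reff{eq:EnvMoments} controls is the $(q/2)$-th moment, $\Ee|f(\omega_0)|^{q/2}\le\Ee\omega_0^{-q}<\infty$. The required rate still comes out (the exponent bookkeeping closes precisely because $q>5$), but the argument as written uses a moment you do not have. Second, the hypotheses of Lemma~\ref{lem:fast} do not ``coincide'' with \reff{eq:EnvMoments}--\reff{eq:EnvMixing}: one must show that \reff{eq:EnvMixing} implies \reff{eq:PeligradMixing} (the paper does this via monotonicity of $\phi$ and H\"older's inequality, for every $\kappa>2$), and one must exhibit admissible parameters in \reff{eq:AssumptionOnR} --- the paper takes $r=1/2-1/q$ and $s\in(1/2,\,1-2/q]$ with $s\le q/2-2$, so that $(k\log k)^{1/2}=o(k^s)$ and \reff{eq:EnvMovingAverages} holds with $\lambda=1/q$. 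This parameter verification is exactly where $q>5$ and $\lambda<1/2$ are consumed; your proposal gestures at it (``maximal inequality \ldots dyadic sub-scales \ldots dictates the threshold $q>5$'') but leaves it as a black box, and in fact no maximal inequality beyond Lemma~\ref{lem:fast} itself is needed --- only the bookkeeping. With these two points repaired, your argument becomes a complete proof along essentially the paper's lines.
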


Especially, the limit laws summarized by Theorem~\ref{thm:Quenched} hold in the following cases:
\begin{itemize}
  \item Independently scattered stationary environments (environments where the site labels $\omega_k$ are
  independent and identically distributed).
  \item Uniformly ergodic environments as discussed in Goldsheid~\cite{Goldsheid2007}.
  \item Environments which are realizations of finite-state irreducible aperiodic stationary
      Markov chains, or more general Markov chains satisfying Doeblin's condition (e.g.\
      \cite{Bradley2005}).
\end{itemize}
Although in many applications it is natural to assume that the environment is stationary, the
limits of Theorem~\ref{thm:Quenched} remain valid under looser conditions, as is clear from the
results of Section~\ref{sec:NonrandomEnvironments}.

Alternative versions of the law of large numbers and the central limit law in
Theorem~\ref{thm:Quenched}, where the centering factors $k_n^\omega$ are replaced by $\Eq X_n$, can
be proved as consequences of Theorems~3.1 and~5.4 in Rassoul-Agha and Sepp\"al\"ainen
\cite{RassoulAghaSeppalainen2006}, if we additionally assume that
\[
 \pre\Bigl( \inf_k \omega_k \ge \delta\Bigr) = 1 \quad \text{for some $\delta > 0$}.
\]
This so-called nonnestling assumption is close in spirit to the uniform ellipticity of
nearest-neighbor random walks in random environments; in the context of our model it corresponds to
the special case $\lambda = 0$ in \reff{eq:SojournGrowth}. Although we believe that the local and
central limit laws in Theorems \ref{thm:LLT}, \ref{thm:CLT}, and \ref{thm:Quenched} remain
generally valid also for the alternative centering $n \mapsto \Eq X_n$, we prefer to use the
centering $n \mapsto k_n^\omega$ defined in~\reff{eq:Centering}, because these factors are easily
computed from the environment. Analogous central limit laws for nearest-neighbor walks were
recently independently found by Goldsheid \cite{Goldsheid2007} and Peterson \cite{PetersonThesis}.

If we were only interested in the law of large numbers~\reff{eq:LLN}, we could do with less
assumptions in Theorem~\ref{thm:Quenched}. For example, as our proof in
Section~\ref{sec:ProofsRandom} shows, the moment condition \reff{eq:EnvMoments} would only be
needed for $q>2$. The mixing assumption \reff{eq:EnvMixing} could be relaxed as well, see for
example Bingham \cite{Bingham1986a} for more details.

\section{Proofs for regular nonrandom environments}
\label{sec:ProofsNonrandom}

This section is devoted to proving Theorems \ref{thm:LLN}--\ref{thm:CLT} for the random walk
$(X_n)$ in an environment that satisfies the regularity assumptions \reff{eq:SojournGrowth} --
\reff{eq:EnvMovingAverages}. The environment $\omega$ shall be fixed once and for all during the
whole section --- here we do not care whether it is a realization of a random sequence or not.

The section is organized as follows. Section~\ref{sec:HittingTimes} describes some preliminaries
on the hitting times of the walk, and Section~\ref{sec:ProofLLN} gives the proof of the law of
large numbers. The proof of the local limit theorem is split into Sections
\ref{sec:ProofLLTOutline}--\ref{sec:ProofLLT}, and the proof of the central limit theorem is in
Section~\ref{sec:ProofCLT}.

\subsection{Hitting times of the walk}
\label{sec:HittingTimes}

We denote the hitting time of $(X_n)$ into site $k$ by $T_k = \min\{n \ge 0: X_n = k\}$, and the
sojourn time at site $k$ by $\tau_k = T_{k+1}-T_k$. Because the walk never moves backwards, the
equivalence
\begin{equation}
 \label{eq:LocationVSHitting}
 X_n = k
 \quad \text{if and only if} \quad
 T_k \le n < T_{k+1}
\end{equation}
is valid for all $k$ and $n$. Moreover, the sojourn times are independent, and $\tau_k$ has a
geometric distribution on $\{1,2,\dots\}$ with success probability $\omega_k$. Hence the mean and
the variance of $\tau_k$ are given by $\Eq \tau_k = \omega_k^{-1}$ and $\Varq(\tau_k) =
(1-\omega_k)\omega_k^{-2}$, respectively. The mean and the variance of $T_k$ are denoted by $\mu_k
= \Eq T_k$ and $\sigma^2_k = \Varq(T_k)$, so that
\begin{equation}
 \label{eq:MuSigma}
 \mu_k       = \sum_{j=0}^{k-1} \omega_j^{-1}
 \quad \text{and} \quad
 \sigma_k^2  = \sum_{j=0}^{k-1} (1-\omega_j)\omega_j^{-2}.
\end{equation}
The following result transforms the realization-by-realization relationship
\reff{eq:LocationVSHitting} into one concerning the probability mass functions.

\begin{lem}
\label{lem:Space2Time}
For any environment $\omega$ and any $k,n \ge 0$,
\[
 \prq(X_n = k) = \omega_k^{-1} \prq(T_{k+1} = n+1).
\]
\end{lem}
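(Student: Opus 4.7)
The plan is to translate the realization-level identity $\{X_n = k\} = \{T_k \le n < T_{k+1}\}$ from \reff{eq:LocationVSHitting} into the probability identity, by conditioning on $T_k$ and using the independence of the sojourn time $\tau_k = T_{k+1} - T_k$ from $T_k$. Since the walk is unidirectional, $T_k$ is determined by $(\tau_0, \ldots, \tau_{k-1})$ and is therefore independent of $\tau_k$, which under $\prq$ is geometric on $\{1, 2, \ldots\}$ with success parameter $\omega_k$.

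First I would write
\[
 \prq(X_n = k) \,=\, \prq(T_k \le n,\ \tau_k > n - T_k) \,=\, \sum_{m=0}^{n} \prq(T_k = m)\,\prq(\tau_k > n-m),
\]
and, similarly,
\[
 \prq(T_{k+1} = n+1) \,=\, \sum_{m=0}^{n} \prq(T_k = m)\,\prq(\tau_k = n+1-m).
\]
Next, using the geometric distribution of $\tau_k$ one has $\prq(\tau_k > j) = (1-\omega_k)^j$ and $\prq(\tau_k = j+1) = \omega_k (1-\omega_k)^j$ for every $j \ge 0$, so term-by-term
\[
 \prq(\tau_k = n+1-m) \,=\, \omega_k\,\prq(\tau_k > n-m).
\]
Pulling the constant factor $\omega_k$ out of the sum and comparing the two displays yields $\prq(T_{k+1} = n+1) = \omega_k\,\prq(X_n = k)$, which rearranges to the claimed identity.

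There is essentially no obstacle; the only point to be careful about is the independence of $\tau_k$ from $T_k$ under the quenched law $\prq$, which is immediate from the fact that $\prq$ is the law of a Markov chain with transitions \reff{eq:RandomWalk} and that $T_k$ is a sum of the sojourn times at the earlier sites $0, 1, \ldots, k-1$. Note also that the sum ranges $0 \le m \le n$ are consistent in both identities because $\prq(\tau_k > n-m) = 0$ for $m > n$ is impossible (it actually equals $1$ for $m = n$, but the constraint $T_k \le n$ restricts $m \le n$ in the first display, while $\prq(\tau_k = n+1-m) = 0$ for $m > n$ in the second).
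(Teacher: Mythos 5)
Your proof is correct and follows essentially the same route as the paper: both rest on the independence of $T_k$ and $\tau_k$ and the geometric identity $\prq(\tau_k > j) = \omega_k^{-1}\prq(\tau_k = j+1)$, with your explicit convolution sums over $\{T_k = m\}$ playing the role of the paper's conditioning on $T_k$ (the paper also notes $\tau_k \ge 1$ to drop the constraint $T_k \le n$ from the event $\{T_{k+1}=n+1\}$, which is the same range bookkeeping you handle at the end).
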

\begin{proof}
Note that $\prq(\tau_k > m) = \omega_k^{-1} \prq(\tau_k = m+1)$ for all $m \ge 0$. Because $T_k$
and $\tau_k$ are independent, we find by applying \reff{eq:LocationVSHitting} and conditioning on
$T_k$ that
\begin{align*}
 \prq(X_n = k)
 &= \prq(T_k \le n, \, T_k + \tau_k > n) \\
 &= \Eq 1_{\{T_k \le n\}} \prq(T_k + \tau_k > n \, | \, T_k) \\
 &= \omega_k^{-1} \Eq 1_{\{T_k \le n\}} \prq(T_k + \tau_k = n + 1 \, | \, T_k) \\
 &= \omega_k^{-1} \prq( T_k \le n, T_k + \tau_k = n + 1).
\end{align*}
This implies that claim, because $T_{k+1} = T_k + \tau_k$ and $\tau_k \ge 1$ almost surely.
\end{proof}

\subsection{Proof of Theorem~\ref{thm:LLN}}
\label{sec:ProofLLN}

By~\reff{eq:SojournGrowth} we see that $\Varq(\tau_k) \le c k^{2\lambda}$ for some constants $c
> 0$ and $\lambda < 1/2$, so that $\sum_{k=0}^\infty k^{-2} \Varq(\tau_k) < \infty$, which implies
as a consequence of Kolmogorov's variance criterion (e.g.\ \cite[Cor.
4.22]{Kallenberg2002}) that $k^{-1} \sum_{j=0}^{k-1} (\tau_j - \omega_j^{-1}) \to 0$ almost
surely. By~\reff{eq:SojournMean}, we conclude that the hitting times of the walk satisfy the
following law of large numbers:
\begin{equation}
 \label{eq:LLNHittingTimes}
 k^{-1} T_k \to \mu \quad \text{almost surely}.
\end{equation}
Further, by~\reff{eq:LocationVSHitting}, we see that $X_n = T^\leftarrow(n+1)-1$, where
$T^\leftarrow$ denotes the generalized inverse of the sequence $(T_k)$ defined in
Appendix~\ref{sec:GeneralizedInverse}. Lemma~\ref{lem:InverseMuLimit} together with
\reff{eq:LLNHittingTimes} now shows that $n^{-1} T^\leftarrow(n) \to \mu^{-1}$ almost surely, and
the proof of Theorem~\ref{thm:LLN} is complete. \qed

\subsection{Outline of proof for the local limit theorem}
\label{sec:ProofLLTOutline}

The starting point of the proof is Lemma~\ref{lem:Space2Time} in Section~\ref{sec:HittingTimes},
which reduces the problem into analyzing the probability mass function of the hitting times $T_k$.
Because $T_k$ is a sum of independent random variables, we may apply a classical local
limit theorem (e.g.\ Petrov~\cite{Petrov}) in
Section~\ref{sec:LLTHittingTimes} to conclude that $\prq(T_k = n) \approx f_k(n)$ for large values
of $k$, where
\begin{equation}
 \label{eq:f}
 f_k(n) = (2\pi \sigma_k^2)^{-1/2} \, \, e^{-\frac{(n-\mu_k)^2}{2\sigma_k^2}},
\end{equation}
and $\mu_k$ and $\sigma^2_k$ denote the mean and the variance of $T_k$ given by~\reff{eq:MuSigma}.

In the rest of the proof we need to transform the Gaussian density $f_k$ of the time variable
into a Gaussian density of the space variable. This will be accomplished in two steps. We show in
Section~\ref{sec:fg} that $f_k(n) \approx g_k(n)$, where
\begin{equation}
 \label{eq:g}
 g_k(n) = (2\pi n\sigma^2 / \mu)^{-1/2} \, \, e^{-\frac{(\mu_k-n)^2}{2 n\sigma^2 / \mu}},
\end{equation}
and further in Section~\ref{sec:hit_loc} that $g_k(n) \approx \mu^{-1} h_n(k)$, where
\begin{equation}
 \label{eq:h}
 h_n(k) = (2\pi n \sigma^2 / \mu^3)^{-1/2} \, \, e^{-\frac{(k-k_n)^2}{2n \sigma^2/\mu^3}}
\end{equation}
is the Gaussian density appearing in the statement of Theorem~\ref{thm:LLT} (we write $k_n$ in
place of $k_n^\omega$ for convenience).

The approximation $g_k(n) \approx \mu^{-1} h_n(k)$ is a subtle part in the argument, where we have
been guided by the following intuition. By approximating $\mu_{k_n} \approx n$, we obtain $\mu_k -
n \approx \mu_k - \mu_{k_n} = \sum_{\ell = k_n}^{k_n+j-1} \omega_\ell^{-1}$, where $j = k-k_n$.
Therefore, the difference of the square roots of the exponents in $\eqref{eq:g}$ and
$\eqref{eq:h}$ can be approximated by
\[
 \frac{(\mu_k-n)}{\sqrt{2 n\sigma^2 / \mu}} - \frac{\mu(k-k_n)}{\sqrt{2 n\sigma^2 / \mu}}
 \, \approx \,
 (2 n \sigma^2 / \mu)^{-1/2} \sum_{\ell=k_n}^{k_n+j-1} (\omega_\ell^{-1} - \mu) .
\]
Condition \eqref{eq:EnvMovingAverages} has been tailored to guarantee that the above difference is
small for values of $k$ such that $\mu_k \approx n$.

An essential technical complication is the fact that the modulating factor $\omega_k^{-1}$
in~\eqref{eq:LLT} (see also Lemma~\ref{lem:Space2Time}) is unbounded. To overcome this difficulty,
we have strived to obtain sharp estimates. Identifying the set of reasonable sufficient
assumptions \reff{eq:SojournGrowth} -- \reff{eq:EnvMovingAverages} has played a crucial role in
the proof.

\subsection{Local limit theorem for the hitting times}
\label{sec:LLTHittingTimes}

By applying a classical local limit theorem  for sums of independent random variables (Petrov
\cite[Theorem VII.5]{Petrov}; see also Davis and McDonald \cite{Davis1995}),
we obtain the following local limit theorem for the hitting times.

\begin{lem}
\label{lem:LLTHittingTimes}
For any environment $\omega$ satisfying \reff{eq:SojournVariance} and
\reff{eq:SojournThirdMoments},
\begin{equation}
 \label{eq:LLTHittingTimes}
 \sup_{n \ge 0} \left| \prq(T_k=n) - f_k(n) \right| = O(k^{-1}),
\end{equation}
where the functions $f_k$ are defined by~\reff{eq:f}.
\end{lem}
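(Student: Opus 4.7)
Since $T_k = \sum_{j=0}^{k-1} \tau_j$ with the $\tau_j$ independent and $\tau_j$ geometric on $\{1,2,\ldots\}$ with success probability $\omega_j$, the plan is to invoke a classical local limit theorem for sums of independent, non-identically distributed, integer-valued random variables with finite third absolute moments (Petrov~\cite{Petrov}, Theorem~VII.5; see also Davis and McDonald~\cite{Davis1995}), and to verify its hypotheses from \reff{eq:SojournVariance} and \reff{eq:SojournThirdMoments}.

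An elementary calculation for the geometric distribution gives
\[
 \Eq \tau_j = \omega_j^{-1},
 \qquad
 \Varq \tau_j = (1-\omega_j)\omega_j^{-2},
 \qquad
 \Eq|\tau_j - \Eq\tau_j|^3 \le C\, \omega_j^{-3},
\]
for an absolute constant $C$. Summing over $j$ and invoking \reff{eq:SojournVariance} and \reff{eq:SojournThirdMoments} then yields $\sigma_k^2 \sim \sigma^2 k$ and $L_k \defas \sum_{j=0}^{k-1} \Eq|\tau_j - \Eq\tau_j|^3 = O(k)$. In particular $\sigma_k \gtrsim \sqrt k$, and the Lyapunov ratio $L_k/\sigma_k^3 = O(k^{-1/2})$ tends to zero. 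Moreover, each $\tau_j$ is supported on every positive integer, so the maximal common lattice span of the summands is $1$, eliminating the usual Cram\'er/non-lattice obstruction.

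The classical local limit theorem for lattice sums under these hypotheses then yields, uniformly in $n$,
\[
 \bigl| \prq(T_k = n) - f_k(n) \bigr| \,=\, O\!\left( L_k / \sigma_k^4 \right) \,=\, O(k^{-1}),
\]
which is exactly the desired bound. (Equivalently, one may view the $O(k^{-1})$ error as the one-term Edgeworth correction $\tfrac16 (\kappa_{3,k}/\sigma_k^2) H_3(x)\phi(x)$ absorbed into the remainder, since $|\kappa_{3,k}| \le L_k = O(k)$ forces this correction to be of the right order.) I expect the only delicate point to be keeping track of implicit constants in Petrov's theorem in the non-identically distributed setting, and in particular ensuring they depend only on the Lyapunov ratio $L_k/\sigma_k^3$ and on the lattice span rather than on the particular environment sequence $(\omega_j)$; this uniformity is built into the Fourier-inversion argument but should be read off carefully. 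Otherwise the argument reduces to routine moment computations for the geometric $\tau_j$ and a direct invocation of the cited theorem.
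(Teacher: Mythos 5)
You correctly identify the right tool (Petrov, Theorem VII.5, for sums of independent, non-identically distributed integer-valued variables) and your moment computations ($\Eq|\tau_j-\Eq\tau_j|^3\le C\omega_j^{-3}$, $\sigma_k^2\asymp k$, Lyapunov ratio $O(k^{-1/2})$) match the paper's first steps. But there is a genuine gap at the lattice step. For non-identically distributed summands, Petrov's theorem does not run on ``Lyapunov ratio plus maximal span $1$'' alone: it carries an additional quantitative smoothing hypothesis, which in the paper's reduction (using $\prq(\tau_j=1)\ge\prq(\tau_j=m)$ for all $m$) reads
\begin{equation*}
 \frac{1}{\log k}\sum_{j<k}\prq(\tau_j=1)\,\prq(\tau_j=2)
 \;=\;\frac{1}{\log k}\sum_{j<k}\omega_j^2(1-\omega_j)\;\to\;\infty .
\end{equation*}
Your assertion that ``each $\tau_j$ is supported on every positive integer, so the span is $1$, eliminating the obstruction'' does not address this: a $\tau_j$ with $\omega_j$ very close to $1$ has span $1$ but is nearly degenerate at $1$ and contributes essentially nothing to the smoothing of the characteristic function away from the origin, and a priori \reff{eq:SojournVariance} alone permits the variance to be carried by a sparse set of sites with small $\omega_j$ (for which $\omega_j^2(1-\omega_j)$ is also small), so the displayed condition is not automatic. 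Consequently your claimed error bound $O(L_k/\sigma_k^4)$ ``with constants depending only on the Lyapunov ratio and the lattice span'' is not what the cited theorem gives in the non-i.i.d.\ lattice setting.

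This missing verification is precisely the nontrivial part of the paper's proof: one writes $\omega_j^{-2}(1-\omega_j)=x_jy_j$ with $x_j=\omega_j^{2/5}(1-\omega_j)^{1/5}$, $y_j=\omega_j^{-12/5}(1-\omega_j)^{4/5}$ and applies H\"older's inequality with exponents $5$ and $5/4$ to get $\sum_{j<k}\omega_j^{-2}(1-\omega_j)\le\bigl(\sum_{j<k}\omega_j^{2}(1-\omega_j)\bigr)^{1/5}\bigl(\sum_{j<k}\omega_j^{-3}(1-\omega_j)\bigr)^{4/5}$; combined with \reff{eq:SojournVariance} and \reff{eq:SojournThirdMoments} this forces $\liminf_k k^{-1}\sum_{j<k}\omega_j^2(1-\omega_j)>0$, which is much stronger than the needed $\gg\log k$ growth. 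Note that the third-moment assumption \reff{eq:SojournThirdMoments} is doing real work here (it rules out environments where a sparse set of very small $\omega_j$ supplies all the variance), whereas in your write-up it is used only to bound $L_k$. To repair your proof, state Petrov's smoothing hypothesis explicitly and supply this (or an equivalent) argument deriving it from \reff{eq:SojournVariance} and \reff{eq:SojournThirdMoments}; the rest of your outline then goes through as in the paper.
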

\begin{proof}
Observe first that by~\reff{eq:SojournVariance},
\[
 \liminf_{k \to \infty} k^{-1} \sum_{j < k} \Varq(\tau_j) = \sigma^2 > 0,
\]
and that
\[
 \limsup_{k \to \infty} k^{-1} \sum_{j < k} \Eq | \tau_j - \Eq \tau_j |^3 < \infty
\]
because of $\Eq | \tau_j - \Eq \tau_j |^3 \le 3 \omega_j^{-3}$ and~\reff{eq:SojournThirdMoments}.
Therefore, to apply \cite[Theorem VII.5]{Petrov}, it suffices to verify (note that $\prq(\tau_j =
1) \ge \prq(\tau_j = m)$ for all $j$ and $m$) that
\begin{equation}
 \label{eq:Petrov}
 \frac{1}{\log k} \sum_{j<k} \prq(\tau_j=1) \prq(\tau_j=2)
 \to \infty.
\end{equation}
By writing $\omega_j^{-2}(1-\omega_j) = x_j y_j$, where $x_j = \omega_j^{2/5} (1-\omega_j)^{1/5}$
and $y_j = \omega_j^{-12/5} (1-\omega_j)^{4/5}$, and applying H\"older's inequality with conjugate
exponents $5$ and $5/4$, we see that
\[
 \sum_{j<k} \omega_j^{-2}(1-\omega_j)
 \le \left( \sum_{j<k} \omega_j^2 (1-\omega_j) \right)^{1/5}
  \! \left( \sum_{j<k} \omega_j^{-3} (1-\omega_j) \right)^{4/5}.
\]
After dividing both sides above by $k$, and applying \reff{eq:SojournVariance} and
\reff{eq:SojournThirdMoments}, we see that
\[
 \liminf_{k \to \infty} k^{-1} \sum_{j<k} \omega_j^2 (1-\omega_j) > 0,
\]
which implies \reff{eq:Petrov}, because $\prq(\tau_j=1) \prq(\tau_j=2) = \omega_j^2 (1-\omega_j)$.
The claim now follows by applying \cite[Theorem VII.5]{Petrov} and recalling that $\sigma_k^{-1} =
O(k^{-1/2})$ by~\reff{eq:SojournVariance}.
\end{proof}

The following result is an analogue of Lemma \ref{lem:LLTHittingTimes}, where the time variable $n$
instead of the space variable $k$ tends to infinity.

\begin{lem}
\label{lem:LLTHittingTimes2}
For any $\lambda \in [0,1/2)$ and any environment $\omega$ satisfying \reff{eq:SojournMean},
\reff{eq:SojournVariance}, and \reff{eq:SojournThirdMoments}, the Gaussian densities $f_k$ defined
by~\reff{eq:f} satisfy
\[
 \sup_{k \ge 1} k^\lambda | \prq(T_k = n) - f_k(n) | = O(n^{\lambda-1}).
\]
\end{lem}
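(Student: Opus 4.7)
The proof combines Lemma \ref{lem:LLTHittingTimes}, which already controls $|\prq(T_k=n)-f_k(n)|$ by $O(1/k)$ uniformly in $n$, with elementary tail estimates on $\prq(T_k = n)$ and on the Gaussian density $f_k(n)$. My plan is to fix a constant $c>0$ small enough that $c\mu<1/2$ (say $c=1/(4\mu)$) and split the range of $k$ into a \emph{bulk regime} $k\ge cn$ and a \emph{tail regime} $k<cn$.

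In the bulk, Lemma \ref{lem:LLTHittingTimes} gives $|\prq(T_k=n)-f_k(n)|\le Ck^{-1}$, and since $\lambda-1<0$ the function $k\mapsto k^{\lambda-1}$ is decreasing, so
\[
k^\lambda|\prq(T_k=n)-f_k(n)| \le Ck^{\lambda-1} \le C(cn)^{\lambda-1} = O(n^{\lambda-1}),
\]
which is the desired bound. (Since $\prq(T_k=n)=0$ for $k>n$, this case also subsumes the large-$k$ range where $\prq(T_k=n)$ vanishes identically.)

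In the tail $k<cn$, I will bound each of the two summands in $|\prq(T_k=n)-f_k(n)|\le \prq(T_k=n)+f_k(n)$ separately. From \reff{eq:SojournMean}, $\mu_k=\mu k\,(1+o(1))$, so $\mu_k<n/2$ and hence $n-\mu_k\ge n/2$ for $n$ sufficiently large. From \reff{eq:SojournVariance}, $\sigma_k^2=\sigma^2 k\,(1+o(1))\le Ck$. Chebyshev's inequality then yields
\[
\prq(T_k=n) \le \prq(|T_k-\mu_k|\ge n-\mu_k) \le \frac{\sigma_k^2}{(n-\mu_k)^2} \le \frac{4Ck}{n^2},
\]
so $k^\lambda\prq(T_k=n)\le 4C\,k^{1+\lambda}/n^2\le 4C(cn)^{1+\lambda}/n^2 = O(n^{\lambda-1})$. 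For the Gaussian density, the exponent satisfies $(n-\mu_k)^2/(2\sigma_k^2)\ge n^2/(8Ck)\ge n/(8Cc)$, so $f_k(n)$ decays exponentially in $n$ uniformly over this range; consequently $k^\lambda f_k(n)=o(n^{\lambda-1})$.

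The main subtlety I anticipate is recognizing that the sharp variance estimate $\sigma_k^2=O(k)$ provided by \reff{eq:SojournVariance}---rather than the coarser $O(k^{1+2\lambda})$ that one could obtain pointwise from the growth of $\omega_k^{-1}$---is exactly what makes $k^{1+\lambda}/n^2$ match $n^{\lambda-1}$ on the range $k\le cn$. With this observation in hand, Chebyshev's inequality suffices and one avoids having to invoke a Chernoff-type large deviation bound or a higher-moment Rosenthal inequality, both of which would be awkward here because $\omega_k^{-1}$ need not be uniformly bounded.
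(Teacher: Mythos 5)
Your proposal is correct and follows essentially the same route as the paper's proof: split at a threshold proportional to $n$ (the paper uses $k\ge\epsilon n$ with $\epsilon<\mu^{-1}$, you use $k\ge n/(4\mu)$), invoke Lemma~\ref{lem:LLTHittingTimes} together with $k^{\lambda-1}\le C n^{\lambda-1}$ in the bulk, and in the small-$k$ range bound $\prq(T_k=n)$ by Chebyshev via $n-\mu_k\gtrsim n$ and $\sigma_k^2=O(k)$, and kill $f_k(n)$ through the exponentially large Gaussian exponent. The only cosmetic difference is bookkeeping of constants (the paper also records $\sigma_-^2=\inf_\ell\sigma_\ell^2/\ell>0$ to control the prefactor $(2\pi\sigma_k^2)^{-1/2}$, a point worth stating explicitly but immediate from~\reff{eq:SojournVariance}).
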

\begin{proof}
Fix an $\epsilon \in (0,\mu^{-1})$, and note that
\begin{equation}
 \label{eq:large_k}
 \sup_{k \ge \epsilon n} k^\lambda | \prq(T_k=n) - f_k(n) | = O(n^{\lambda-1})
\end{equation}
by Lemma~\ref{lem:LLTHittingTimes}. Therefore, we only need to analyze $\prq(T_k=n)$ and $f_k(n)$
for $1 \le k \le \epsilon n$. As a preliminary, note that, because $\epsilon < \mu^{-1}$ and
$\mu_n/n \to \mu$ by \reff{eq:SojournMean}, we may fix a constant $c_1 < 1$ and an integer $n_0$
such that $\mu_{\floor{\epsilon n}} \le c_1 n$ for all $n \ge n_0$.

Assume now that $k \le \epsilon n$ and $n \ge n_0$. Then $n - \mu_k \ge n - \mu_{[\epsilon n]} \ge
(1-c_1) n$, and moreover, $\sigma^2_k \le \sigma^2_+ \epsilon n$, where $\sigma^2_+ = \sup_{\ell
\ge 1} (\sigma^2_\ell/\ell)$ is finite by \reff{eq:SojournVariance}. Therefore,
\begin{equation}
 \label{eq:small_k1}
 (n-\mu_k)^2/\sigma_k^2 \ge c_2 n,
\end{equation}
where $c_2 = (1-c_1)^2/(\sigma^2_+ \epsilon) > 0$. A rough estimate together with Chebyshev's
inequality shows that
\[
 \prq(T_k = n)
 \le \prq(|T_k - \mu_k| \ge |n-\mu_k|)
 \le (n-\mu_k)^{-2} \sigma^2_k,
\]
so by \reff{eq:small_k1}, we conclude that
\begin{equation}
 \label{eq:small_k2}
 k^\lambda \prq(T_k = n)
 \le c_2^{-1} \epsilon^\lambda n^{\lambda-1}.
\end{equation}
By applying \reff{eq:small_k1} once more, we see that
\begin{equation}
 \label{eq:small_k3}
 k^\lambda f_k(n)
 \le (2 \pi \sigma^2_-)^{-1/2} k^{\lambda-1/2} e^{-c_2 n/2}
 \le (2 \pi \sigma^2_-)^{-1/2} e^{-c_2 n/2},
\end{equation}
where $\sigma^2_- = \inf_{\ell \ge 1} (\sigma_\ell^2/\ell)$ is strictly positive by
\reff{eq:SojournVariance}. The proof is now completed by combining the estimates \reff{eq:small_k2}
and \reff{eq:small_k3} with \reff{eq:large_k}.
\end{proof}

\subsection{Variance of the hitting times}
\label{sec:fg}

\begin{lem}
\label{lem:fg}
For any $\lambda \in [0,1/2)$ and any environment $\omega$ satisfying \reff{eq:SojournMean} and
\reff{eq:SojournVariance},
\begin{equation}
 \label{eq:fg}
 \max_{1 \le k \le n} k^\lambda | f_k(n) - g_k(n) | = o(n^{-1/2}),
\end{equation}
where the functions $f_k$ and $g_k$ are defined by~\reff{eq:f} and~\reff{eq:g}, respectively.
\end{lem}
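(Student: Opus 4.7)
My plan is to treat $f_k(n) = \phi(\sigma_k^2; n-\mu_k)$ and $g_k(n) = \phi(n\sigma^2/\mu; n-\mu_k)$ as values of the same Gaussian density $\phi(v;y) := (2\pi v)^{-1/2}e^{-y^2/(2v)}$ at two different variances $A_k := \sigma_k^2$ and $B_n := n\sigma^2/\mu$. Assumptions \reff{eq:SojournMean}--\reff{eq:SojournVariance} together with the definition of $k_n^\omega$ will force $A_k \approx B_n$ whenever $k$ is close to $k_n^\omega$, while pointwise Gaussian tail decay will handle $k$ far from $k_n^\omega$. I would split the range accordingly into a central region $\cC_n = \{k \in [1,n] : |k - k_n^\omega| \le c_1\sqrt{n\log n}\}$ and its complement $\cT_n$, fixing the constant $c_1$ only after the tail analysis.

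On $\cC_n$ I would apply the mean value theorem in the variance variable. Differentiation yields $\partial_v \phi(v;y) = \phi(v;y)(y^2 - v)/(2v^2)$, and the substitution $t = y^2/(2v)$ together with the boundedness of $t\mapsto te^{-t}$ produces the uniform estimate
\[
 |\partial_v \phi(v;y)| \le C v^{-3/2} \qquad \text{for all } y \in \R.
\]
Since both $A_k$ and $B_n$ are of order $n$ on $\cC_n$, the MVT gives $|f_k(n) - g_k(n)| \le Cn^{-3/2}|A_k - B_n|$, so the core task becomes showing
\[
 |A_k - B_n| \,=\, o\bigl(n^{1-\lambda}(\log n)^{-1/2}\bigr) \quad \text{uniformly on } \cC_n.
\]
This I would get by splitting $A_k - B_n = (\sigma_k^2 - k\sigma^2) + \sigma^2(k - n/\mu)$, applying \reff{eq:SojournVariance} to the first summand, and combining the defining bound $|k - k_n^\omega| \le c_1\sqrt{n\log n}$ with the auxiliary estimate $|k_n^\omega - n/\mu| = o(n^{1-\lambda}(\log n)^{-1/2})$ for the second. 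The auxiliary estimate follows from \reff{eq:SojournMean} and \reff{eq:SojournGrowth} applied to the sandwich $n \le \mu_{k_n^\omega} \le n + \omega_{k_n^\omega - 1}^{-1}$; note that $\sqrt{n\log n} = o(n^{1-\lambda}(\log n)^{-1/2})$ precisely because $\lambda < 1/2$. Multiplying by $k^\lambda = O(n^\lambda)$ delivers $k^\lambda|f_k - g_k| = o(n^{-1/2}(\log n)^{-1/2}) = o(n^{-1/2})$ on $\cC_n$.

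On $\cT_n$ I would exploit Gaussian tails. Because $\omega_j^{-1} \ge 1$ for every $j$, one has $|\mu_k - \mu_{k_n^\omega}| \ge |k - k_n^\omega|$, whereas $|\mu_{k_n^\omega} - n| = O(n^\lambda) = o(\sqrt{n\log n})$, so $|n - \mu_k| \ge |k - k_n^\omega|/2 \ge (c_1/2)\sqrt{n\log n}$ on $\cT_n$ for large $n$. Together with $\sigma_k^2 \le Ck \le Cn$ and $B_n = \Theta(n)$, both exponents $(n-\mu_k)^2/(2v)$ are bounded below by a multiple of $c_1^2\log n$, giving $f_k(n) + g_k(n) \le Cn^{-1/2 - \gamma c_1^2}$ for some $\gamma > 0$; choosing $c_1$ with $\gamma c_1^2 > \lambda$ makes $k^\lambda(f_k + g_k) = o(n^{-1/2})$ uniformly. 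The handful of very small $k$ (where $\sigma_k^2$ may be bounded only by a constant) are dispatched separately using $\mu_k = O(k^{1+\lambda}) \ll n$, which yields super-polynomial decay. The subtle point of the argument is the sharp matching between the $(\log k)^{-1/2}$ factor built into \reff{eq:SojournMean}--\reff{eq:SojournVariance} and the $\sqrt{\log n}$ width of $\cC_n$: a narrower central region would ruin the tail bound, while a wider one would spoil the central estimate, so identifying exactly this width is the main technical point.
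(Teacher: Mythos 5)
Your overall strategy coincides with the paper's: split the range of $k$ at the scale $\sqrt{n\log n}$, compare the two Gaussians in the central region by exploiting the matching $o(k^{-\lambda}(\log k)^{-1/2})$ rates in \reff{eq:SojournMean}--\reff{eq:SojournVariance}, and make the exponent at least a large multiple of $\log n$ in the tail region. Your central estimate is sound: the derivative bound $|\partial_v\phi(v;y)|\le Cv^{-3/2}$ is correct, both $\sigma_k^2$ and $n\sigma^2/\mu$ are of order $n$ on your central set, the decomposition of $\sigma_k^2-n\sigma^2/\mu$ gives $o(n^{1-\lambda}(\log n)^{-1/2})$ there, and multiplying by $k^\lambda\le n^\lambda$ yields $o(n^{-1/2})$. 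This is a mild variant of the paper's argument, which splits on $|n-\mu_k|\le u(n\log n)^{1/2}$ (centering in time, so $k_n^\omega$ never appears in this lemma) and compares the precisions $n/\sigma_k^2$ and $\mu/\sigma^2$ via $|e^{-x^2}-e^{-y^2}|\le|x-y|$ rather than applying the mean value theorem in the variance parameter.

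Two blemishes, both repairable. First, by centering in space around $k_n^\omega$ you import an assumption the lemma does not grant: the bounds $|\mu_{k_n^\omega}-n|=O(n^\lambda)$ and $\mu_k=O(k^{1+\lambda})$ invoke \reff{eq:SojournGrowth}, whereas Lemma~\ref{lem:fg} assumes only \reff{eq:SojournMean} and \reff{eq:SojournVariance}; from \reff{eq:SojournMean} alone one only gets $\omega_k^{-1}=O(k^{1-\lambda}(\log k)^{-1/2})$, which is not $o(\sqrt{n\log n})$ when $\lambda<1/2$, so your tail inequality $|n-\mu_k|\ge|k-k_n^\omega|/2$ is not justified by the lemma's stated hypotheses. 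This is harmless for the paper's purposes (Theorem~\ref{thm:LLT} assumes \reff{eq:SojournGrowth}) and easy to remove: either split on $|n-\mu_k|$ as the paper does, or replace that step by the assumption-free inequality $|\mu_k-n|\ge|k-k_n^\omega|-1$ (Lemma~\ref{lem:InverseMuBound} with $c=1$), noting also that $|k_n^\omega-n/\mu|=o(n^{1-\lambda}(\log n)^{-1/2})$ already follows from \reff{eq:SojournMean} alone. Second, your tail estimate $f_k(n)+g_k(n)\le Cn^{-1/2-\gamma c_1^2}$ presumes a prefactor $O(n^{-1/2})$ for $f_k$, which requires $\sigma_k^2\gtrsim n$, i.e.\ $k\gtrsim n$; the problematic range is not just ``very small $k$'' but all $k\le\epsilon n$. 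The simplest fix is the paper's: bound $k^\lambda(2\pi\sigma_k^2)^{-1/2}\le Ck^{\lambda-1/2}\le C$ using $\sigma_k^2\ge\sigma_-^2k$ and $\lambda<1/2$, and choose $c_1$ so large that the exponential gain exceeds $n^{-1/2}$; alternatively, for $k\le\epsilon n$ with $\epsilon<\mu^{-1}$ the exponent is of order $n$ and everything is super-polynomially small, as in Lemma~\ref{lem:LLTHittingTimes2}. With these adjustments your argument goes through.
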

\begin{proof}

The proof is split into two parts according to whether or not $|n-\mu_k| \le u (n \log n)^{1/2}$,
where $u$ is a large constant to be determined later.

(i) Assume that $k \le n$ is such that $|n-\mu_k| \le u (n \log n)^{1/2}$. Using the triangle
inequality and the inequality $|e^{-x^2}-e^{-y^2}| \le |x-y|$, we see that
\begin{align*}
 &(2\pi n)^{1/2} \left| f_k(n) - g_k(n) \right| \\
 & \quad \le \left| (n/\sigma_k^2)^{1/2} - (\mu / \sigma^2)^{1/2} \right| + (\mu/\sigma^2)^{1/2}
   \left| e^{-\frac{(n-\mu_k)^2}{2\sigma_k^2}} - e^{-\frac{(n-\mu_k)^2}{2 n\sigma^2 / \mu}} \right| \\
 & \quad \le \left( 1 + (2 \sigma^2/\mu)^{-1/2} \frac{|n-\mu_k|}{\sqrt{n}} \right)
   \left| (n/\sigma_k^2)^{1/2} - (\mu / \sigma^2)^{1/2} \right|.
\end{align*}
Consequently, assuming that $n$ is large enough so that $u (\log n)^{1/2} \ge 1$,
\[
 (2\pi n)^{1/2} \left| f_k(n) - g_k(n) \right|
 \le c_1 u (\log n)^{1/2} \left| (n/\sigma_k^2)^{1/2} - (\mu / \sigma^2)^{1/2} \right|
\]
where $c_1 = 1 + (2\sigma^2/\mu)^{-1/2}$.

Observe next that, assuming $n$ is large enough so that $n - u (n \log n)^{1/2} \ge n/2$,
\begin{equation}
 \label{eq:kLower}
 k = (\mu_k/k)^{-1} (n - (n-\mu_k)) \ge (2 \mu_+)^{-1} n,
\end{equation}
where $\mu_+ = \sup_{\ell \ge 1} (\mu_\ell/\ell)$. Further,
\[
 \left| (n/\sigma_k^2)^{1/2} - (\mu / \sigma^2)^{1/2} \right|
 = \frac{| n/\sigma^2_k - \mu/\sigma^2 |}{(n/\sigma^2_k)^{1/2} + (\mu/\sigma^2)^{1/2}}
 \le (\sigma^2/\mu)^{1/2} | n/\sigma^2_k - \mu/\sigma^2 |.
\]
Now using \reff{eq:kLower} we find that $\sigma_k^2 \ge c_2^{-1}n$ for $c_2 = 2\mu_+/\sigma^2_-$.
Therefore,
\[
 \left| n/\sigma_k^2 - \mu / \sigma^2 \right|
 = |(n-\mu_k)/\sigma^2_k + d(k)|
 \le c_2 u n^{-1/2} (\log n)^{1/2} + |d(k)|,
\]
where
\[
 d(k) = \frac{\mu_k/k}{\sigma_k^2/k} - \frac{\mu}{\sigma^2}.
\]
As a consequence,
\begin{equation}
 \label{eq:fg1}
 n^{1/2} k^\lambda \left| f_k(n) - g_k(n) \right|
 \le c_3 u n^\lambda (\log n)^{1/2} \left( u n^{-1/2} (\log n)^{1/2} + |d(k)| \right)
\end{equation}
for all large enough $n$, where $c_3 = c_1 \max(c_2,1)$. By combining \reff{eq:SojournMean} and
\reff{eq:SojournVariance}, we find that $|d(k)| = o(k^{-\lambda} (\log k)^{-1/2})$. Because $(2
\mu_+)^{-1} n \le k \le n$ by \reff{eq:kLower}, we conclude that the right side above tends to zero
as $n \to \infty$.

(ii) Assume now that $k \le n$ is such that $|n-\mu_k| > u (n \log n)^{1/2}$. Note that $\sigma^2_-
k \le \sigma^2_k \le \sigma^2_+ k$, where $\sigma^2_- = \inf_{\ell \ge 1} (\sigma_\ell^2/\ell)$ and
$\sigma^2_+ = \sup_{\ell \ge 1} (\sigma^2_\ell/\ell)$ are finite and strictly positive by
\reff{eq:SojournVariance}. Therefore, the exponent in the definition of $f_k$ is bounded by
\[
 \frac{(\mu_k - n)^2}{2 \sigma^2_k} \ge (2 \sigma^2_+)^{-1} u^2 \log n.
\]
Consequently,
\begin{equation}
 \label{eq:fg2}
 n^{1/2} k^\lambda f_k(n)
 \le c_4 k^{\lambda - 1/2} n^{1/2 - c_5 u^2}
 \le c_4 n^{1/2 - c_5 u^2},
\end{equation}
where $c_4 = (2 \pi \sigma^2_-)^{-1/2}$ and $c_5 = (2 \sigma^2_+)^{-1}$. For the function $g_k$ we
immediately see that
\begin{equation}
 \label{eq:fg3}
 n^{1/2} k^\lambda g_k(n) \le c_6 n^{1/2 - c_7 u^2},
\end{equation}
where $c_6 = (2\pi \sigma^2/\mu)^{-1/2}$ and $c_7 = (2 \sigma^2/\mu)^{-1}$. The proof is now
finished by choosing $u>0$ large enough so that $c_5 u^2 > 1/2$ and $c_7 u^2 > 1/2$, and combining
the estimates \reff{eq:fg2} and \reff{eq:fg3} with \reff{eq:fg1}.
\end{proof}

\subsection{From hitting times to walk locations}
\label{sec:hit_loc}

\begin{lem}
\label{lem:gh}
For any environment $\omega$ satisfying \reff{eq:SojournMean} and~\reff{eq:EnvMovingAverages} for
some $\lambda \in [0,1/2)$,
\[
 \max_{1 \le k \le n} k^\lambda | g_k(n) - \mu^{-1} h_n(k) | = o(n^{-1/2}),
\]
where the functions $g_k$ and $h_n$ are defined by~\reff{eq:f} and~\reff{eq:h}, respectively.
\end{lem}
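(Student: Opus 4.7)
The plan is to start by observing that $g_k(n)$ and $\mu^{-1}h_n(k)$ share the common prefactor $\mu^{1/2}(2\pi n\sigma^2)^{-1/2}$, so the task reduces to comparing the two Gaussian exponents. Setting
\[
X_k = \frac{\mu_k - n}{\sqrt{n\sigma^2/\mu}}, \qquad Y_k = \frac{\mu(k-k_n)}{\sqrt{n\sigma^2/\mu}},
\]
and invoking the Lipschitz bound $|e^{-X^2/2} - e^{-Y^2/2}| \le |X - Y|$ already used in Lemma~\ref{lem:fg}, it would suffice to show $k^\lambda|X_k - Y_k| = o(1)$ uniformly in $1 \le k \le n$, i.e.\
\[
k^\lambda \bigl|\mu_k - n - \mu(k - k_n)\bigr| = o(n^{1/2}).
\]

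To establish this I would fix $u>0$ and split according to whether $|k - k_n| \le u\, b(n)$ (small regime) or $|k - k_n| > u\, b(n)$ (large regime), with $b(n) = (n\log n)^{1/2}$ (or $n^{1/2}$ when $\lambda = 0$). In the small regime I would decompose
\[
\mu_k - n - \mu(k - k_n) = \Bigl[\sum_{\ell=k_n}^{k-1}(\omega_\ell^{-1}-\mu)\Bigr] + (\mu_{k_n} - n),
\]
with the usual sign convention for $k<k_n$. Since $k_n \sim n/\mu$ by Theorem~\ref{thm:LLN}, the scales $b(k_n)$ and $b(n)$ are comparable, so \reff{eq:EnvMovingAverages} applied at base point $k_n$ bounds the bracketed sum by $o(k_n^{1/2-\lambda}) = o(n^{1/2-\lambda})$. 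For the other summand, the definition of $k_n$ gives $0 \le \mu_{k_n} - n \le \omega_{k_n-1}^{-1}$, and the $j=1$ case of \reff{eq:EnvMovingAverages} yields $\omega_{k_n-1}^{-1} = \mu + o(k_n^{1/2-\lambda})$. Summing the two contributions and multiplying by $k^\lambda \le n^\lambda$ closes the small regime.

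For the large regime I would use the trivial lower bound $\omega_\ell^{-1} \ge 1$ together with the estimate $|\mu_{k_n}-n| = o(n^{1/2})$ just obtained to conclude that, for $n$ large,
\[
|\mu_k - n| \ge |k - k_n| - |\mu_{k_n}-n| \ge (u/2)(n\log n)^{1/2}.
\]
This places both $g_k(n)$ and $\mu^{-1}h_n(k)$ below $C\, n^{-1/2 - c_0 u^2}$ for some constant $c_0>0$ depending only on $\mu,\sigma$; multiplying by $k^\lambda \le n^\lambda$ and picking $u$ large enough that $c_0 u^2 > \lambda$ then makes each term $o(n^{-1/2})$, completing the argument.

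The hard part, I expect, is recognizing that \reff{eq:EnvMovingAverages} specialized to $j=1$ secretly strengthens \reff{eq:SojournGrowth} into $\omega_k^{-1} = \mu + o(k^{1/2-\lambda})$. This sharpening is what tames the ``discretization error'' $\mu_{k_n} - n$ when $\lambda \ge 1/4$; the naive estimate $\omega_{k_n-1}^{-1} = O(k_n^\lambda)$ coming from \reff{eq:SojournGrowth} alone is too coarse relative to the target $o(n^{1/2-\lambda})$ in the small regime, and missing this observation is what would stall the proof.
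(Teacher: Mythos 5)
Your proposal is correct and follows essentially the same route as the paper's proof: the same Lipschitz comparison of the two exponents after noting the common prefactor, the same decomposition of $(\mu_k-n)-\mu(k-k_n)$ controlled by \reff{eq:EnvMovingAverages} (including the $j=1$ observation that tames $\omega_{k_n-1}^{-1}$ without invoking \reff{eq:SojournGrowth}, exactly as the paper does), and the same tail estimate for $|k-k_n|$ beyond the $u\,b(\cdot)$ window with $u$ chosen so the polynomial decay beats $n^\lambda$. The only cosmetic deviations are splitting at $u\,b(n)$ instead of $u\,b(k_n)$ and bounding $|\mu_k-n|$ from below by the triangle inequality rather than via Lemma~\ref{lem:InverseMuBound} (and note that $k_n/n\to\mu^{-1}$ follows from \reff{eq:SojournMean} and Lemma~\ref{lem:InverseMuLimit}, not from Theorem~\ref{thm:LLN}).
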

\begin{proof}
We will show the claim by treating separately the cases $|k-k_n| \le u b(k_n)$ and $|k-k_n| > u
b(k_n)$, where $b(k) = (k \log k)^{1/2}$ is as in \reff{eq:EnvMovingAverages}, and $u>0$ is a large
constant to be determined later.

(i) Assume that $k \le n$ is such that $|k-k_n| \le u b(k_n)$. Using the inequality $|e^{-x^2} -
e^{-y^2}| \le |x-y|$, we see that
\[
 | g_k(n) - \mu^{-1} h_n(k) |
 \le c_1 n^{-1} | d(k,n) |,
\]
where $c_1 = \pi^{-1/2} (2 \sigma^2/\mu)^{-1}$, and $d(k,n) = (n-\mu_k) - \mu(k_n - k)$. Note that
\begin{equation}
 \label{eq:InverseDiff1}
 |d(k,n)|
 = \left| \sum_{\ell=k}^{k_n-1}(\omega_\ell^{-1}-\mu) + n - \mu_{k_n} \right|
 \le \left| \sum_{\ell=k}^{k_n-1}(\omega_\ell^{-1}-\mu) \right| + \omega^{-1}_{k_n-1},
\end{equation}
where the latter inequality is due to $\mu_{k_n} - \omega_{k_n-1}^{-1} < n \le \mu_{k_n}$. To
analyze the right side of \eqref{eq:InverseDiff1}, observe that, because $k_n \to \infty$, we see
using~\reff{eq:EnvMovingAverages} that
\[
 \max_{k:|k-k_n| \le u b(k_n)} \left| \sum_{\ell=k}^{k_n-1} (\omega_\ell^{-1}-\mu) \right|
 =
 \max_{j:|j| \le u b(k_n)} \left|  \sum_{\ell=k_n}^{k_n+j-1} (\omega_\ell^{-1}-\mu) \right|
 = o(k_n^{1/2 - \lambda}).
\]
The above limiting relation also shows (substitute $j=1$) that $\omega^{-1}_{k_n-1} =
o(k_n^{1/2-\lambda})$. Because $k_n/n \to \mu^{-1}$ (by~\eqref{eq:SojournMean}
and Lemma~\ref{lem:InverseMuLimit}), it follows that
\[
 \max_{k: |k-k_n| \le u b(k_n)} | d(k,n) | = o(n^{1/2-\lambda}),
\]
and therefore,
\begin{equation}
 \label{eq:fh2}
 n^{1/2} \max_{1 \le k \le n: |k-k_n| \le u b(k_n)} k^\lambda | g_k(n) - \mu^{-1} h_n(k) |
 \to 0.
\end{equation}

(ii) Assume that $k \le n$ is such that $|k-k_n| > u b(k_n)$ for some $u \ge 2$ and $n \ge n_0$,
where $n_0$ has been chosen large enough so that $k_n \ge \frac12 \mu^{-1} n$ for all $n \ge n_0$
(this is possible by virtue of \reff{eq:SojournMean} and Lemma~\ref{lem:InverseMuLimit}). Observe
first that, because $\omega_k^{-1} \ge 1$ and $|k-k_n| \ge 2$, we see by
Lemma~\ref{lem:InverseMuBound} that $|\mu_k - n| \ge |k - k_n|-1 \ge \frac12 |k-k_n|$. Hence, the
exponent in the definition of $g_k(n)$ is bounded by
\[
 \frac{(n-\mu_k)^2}{2 n \sigma^2 / \mu}
 \ge c_1 \frac{(k-k_n)^2}{k_n}
 \ge c_1 u^2 \log k_n
 \ge - c_1 u^2 \log (2 \mu) + c_1 u^2 \log n,
\]
where $c_1 = (16 \sigma^2)^{-1}$. The same bound is also valid for the exponent in the definition
of $h_n(k)$, because $\mu^3 > \mu$. Therefore, we obtain
\begin{equation}
 \label{eq:fh3}
 n^{1/2} k^\lambda | g_k(n) - \mu^{-1} h_n(k)|
 \le c_2 (2\mu)^{c_1 u^2} n^{\lambda - c_1 u^2},
\end{equation}
where $c_2 = 2 (2\pi \sigma^2 / \mu)^{-1/2}$. The right side above tends to zero as $n \to \infty$,
if in addition to $u \ge 2$, we also require that $u > (\lambda/c_1)^{1/2}$. The proof is now
completed by combining \reff{eq:fh2} and~\reff{eq:fh3}.
\end{proof}

\begin{lem}
\label{lem:ExponentialBound}
For any $\lambda \ge 0$ and any environment $\omega$ satisfying \reff{eq:SojournMean}, there
exists a constant $c>0$ such that the functions $h_n(k)$ defined by~\reff{eq:h} satisfy
\[
 \sup_{k>n} k^\lambda h_n(k) = o(e^{-cn}).
\]
\end{lem}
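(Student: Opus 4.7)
The plan is to exploit two facts: (i) assumption~\eqref{eq:SojournMean} together with Lemma~\ref{lem:InverseMuLimit} forces $k_n/n \to \mu^{-1}$, and (ii) since $\mu > 1$ by hypothesis, the point $k_n$ lies strictly to the left of $n$, so any $k > n$ is at distance at least $n - k_n \sim (1-\mu^{-1})n$ from the center of the Gaussian $h_n$. Hence we get an honest linear-in-$n$ lower bound on the Gaussian exponent, which gives the required exponential decay.

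Concretely, fix $\alpha = (1-\mu^{-1})/2 > 0$. From $k_n/n \to \mu^{-1}$ there exists $n_0$ such that $n - k_n \ge \alpha n$ for all $n \ge n_0$. For any $k > n \ge n_0$ I would write
\[
k - k_n \;=\; (k-n) + (n-k_n) \;\ge\; (k-n) + \alpha n,
\]
and use the elementary bound $(k-k_n)^2 \ge (k-k_n)\cdot \alpha n$ (valid because $k-k_n \ge \alpha n$) to estimate the exponent in $h_n(k)$ by
\[
\frac{(k-k_n)^2}{2n\sigma^2/\mu^3} \;\ge\; \frac{\alpha \mu^3}{2\sigma^2}\,(k-k_n) \;\ge\; c_1(k-n) + c_1 \alpha n,
\]
where $c_1 = \alpha\mu^3/(2\sigma^2)$. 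Therefore
\[
h_n(k) \;\le\; (2\pi n\sigma^2/\mu^3)^{-1/2}\, e^{-c_1 \alpha n}\, e^{-c_1(k-n)}.
\]

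It remains to absorb the polynomial factor $k^\lambda$. The function $k \mapsto k^\lambda e^{-c_1(k-n)}$ is eventually decreasing in $k$ (its logarithmic derivative $\lambda/k - c_1$ is negative once $k > \lambda/c_1$), so for $n$ large enough its supremum over $k > n$ is attained at $k = n$ and equals $n^\lambda$. Putting everything together,
\[
\sup_{k>n} k^\lambda h_n(k) \;\le\; C\, n^{\lambda - 1/2}\, e^{-c_1 \alpha n},
\]
and this is $o(e^{-cn})$ for any $0 < c < c_1 \alpha$, which proves the lemma.

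There is no real obstacle here — the only thing to be slightly careful about is to make sure the constants $\alpha$, $c_1$, and the threshold $n_0$ are chosen in the right order, and to invoke the hypothesis $\mu>1$ explicitly to get a strictly positive gap between $k_n$ and $n$. Everything else is a routine Gaussian tail estimate.
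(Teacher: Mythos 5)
Your proof is correct and follows essentially the same route as the paper: both use \eqref{eq:SojournMean} with Lemma~\ref{lem:InverseMuLimit} to place $k_n$ a definite fraction below $n$ (the paper writes $k_n \le c_1 n$ with $c_1<1$, you write $n-k_n \ge \alpha n$), derive a linear-in-$n$ lower bound on the Gaussian exponent, and absorb the factor $k^\lambda$ via the eventual monotonicity of $t \mapsto t^\lambda e^{-ct}$. The only cosmetic difference is that the paper bounds the exponent by a constant times $k$ directly, whereas you split $k-k_n \ge (k-n)+\alpha n$; both yield the same $O(n^{\lambda-1/2}e^{-c'n}) = o(e^{-cn})$ conclusion.
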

\begin{proof}
Recalling (by \eqref{eq:SojournMean} and Lemma~\ref{lem:InverseMuLimit}) that $k_n/n \to \mu^{-1}
< 1$, we may fix a positive constant $c_1 < 1$ and an integer $n_0$ such that $k_n \le c_1 n$ for
all $n \ge n_0$. Assume now that $k > n$ and $n \ge n_0$. Then, $k-k_n \ge (1-c_1)k$, so that the
exponent in the expression of $h_n(k)$ is bounded by
\[
 \frac{(k-k_n)^2}{2 n \sigma^2/\mu^3}
 \ge \frac{(1-c_1)^2}{2 \sigma^2/\mu^3} k.
\]
As a consequence,
\[
 k^\lambda h_n(k) \le c_2 n^{-1/2} k^\lambda e^{- c_3 k},
\]
where $c_2 = (2\pi \sigma^2/\mu^3)^{-1/2}$ and $c_3 = (1-c_1)^2 (2 \sigma^2/\mu^3)^{-1}$. Because
the function $t \mapsto t^\lambda e^{-c_3 t}$ is decreasing on the interval $[\lambda/c_3,\infty)$,
we conclude that
\[
 \sup_{k > n} k^\lambda h_n(k) \le c_2 n^{\lambda-1/2} e^{- c_3 n},
\]
for all $n \ge \max(n_0,\lambda/c_3)$, so the claim follows.
\end{proof}

\subsection{Proof of Theorem~\ref{thm:LLT}}
\label{sec:ProofLLT}

By combining Lemmas \ref{lem:LLTHittingTimes2}, \ref{lem:fg}, and \ref{lem:gh} we see that
\[
 \max_{1 \le k \le n} k^\lambda \left| \prq(T_k = n) - \mu^{-1} h_n(k) \right| = o(n^{-1/2}),
\]
where the functions $h_n$ are defined by~\reff{eq:h}. Hence by Lemma~\ref{lem:ExponentialBound} and
the fact that $\prq(T_k = n) = 0$ for all $k > n$, we conclude that
\[
 \sup_{k \ge 1} k^\lambda \left| \prq(T_k = n) - \mu^{-1} h_n(k) \right| = o(n^{-1/2}).
\]
Further, because $\omega_k^{-1} \le c_1 k^\lambda \le c_1 (k+1)^\lambda$ by
\reff{eq:SojournGrowth}, we see by Lemma~\ref{lem:Space2Time} that
\[
 \sup_{k \ge 0} \left| \prq(X_n=k) -  (\omega_k^{-1}/\mu) h_{n+1}(k+1)) \right|
 = o(n^{-1/2}).
\]

We complete the proof of Theorem~\ref{thm:LLT} by showing below that
\begin{equation}
 \label{eq:hDiff}
 \sup_{k \ge 0} k^\lambda | h_{n+1}(k+1) - h_n(k) | = o(n^{-1/2}).
\end{equation}
Let us write $h_n(k) = c_1 n^{-1/2} e^{-c_2 \alpha_{k,n}^2}$, where $c_1 = (2\pi
\sigma^2/\mu^3)^{-1/2}$, $c_2 = (2 \sigma^2/\mu^3)^{-1}$, and $\alpha_{k,n} = n^{-1/2} (k-k_n)$.
Note that
\[
 \alpha_{k+1,n+1} - \alpha_{k,n}
 = (n+1)^{-1/2} (k_n -k_{n+1} + 1) + (k-k_n)( (n+1)^{-1/2} - n^{-1/2}).
\]
Note that $0 \le k_{n+1}-k_n \le 1$ (Lemma~\ref{lem:InverseMuBound}) and $|k-k_n| \le (1+c_3)n$
for all $k \le n$, where $c_3 = \sup_{\ell \ge 0}(k_\ell/\ell)$ is finite by
Lemma~\ref{lem:InverseMuLimit}. Therefore by applying the inequality $n^{-1/2} - (n+1)^{-1/2} \le
\frac12 n^{-3/2}$, we see that
\[
 |\alpha_{k+1,n+1} - \alpha_{k,n}| \le (2+(1+c_3)/2) n^{-1/2}
\]
for all $k \le n$. This estimate combined with the inequality $|e^{-x^2} - e^{-y^2}| \le |x-y|$
now shows that
\[
 \max_{k \le n} k^\lambda | h_{n+1}(k+1) - h_n(k) | = O(n^{\lambda-1}).
\]
Together with Lemma~\ref{lem:ExponentialBound}, we now conclude the validity of~\reff{eq:hDiff},
and the proof of Theorem~\ref{thm:LLT} is complete.
\qed

\subsection{Proof of Theorem~\ref{thm:CLT}}
\label{sec:ProofCLT}

It suffices to show that
\[
 \prq \! \left( x < \frac{X_n - k_n}{\tilde\sigma \sqrt n} \le y \right)
 \to \frac{1}{\sqrt{2\pi}} \int_x^y e^{-t^2/2} \, dt
\]
for all $x<y$, where $\tilde\sigma^2 = \sigma^2 / \mu^{3}$. Note that the left side above can be
written as $\sum_{k\in I_n} \prq(X_n=k)$, where the set $I_n = I_n(x,y)$ is defined by
\[
 I_n
 = \left\{k \in \Z: \floor{x \tilde\sigma \sqrt{n}} + 1 \le k - k_n \le \floor{y \tilde\sigma \sqrt{n}} \right\},
\]
and by Theorem~\ref{thm:LLT},
\[
 \sum_{k\in I_n} \prq(X_n=k) - \mu^{-1} \sum_{k \in I_n} \omega_k^{-1} h_n(k) \to 0.
\]

Note that for any real numbers $x < y$ and any sequence $(a_k)_{k \in \Z}$,
\begin{equation}
 \label{eq:DiscreteIntegral}
 \left| \int_x^y a_{\floor{t}} \, dt - \sum_{k=\floor{x}+1}^{\floor{y}} a_k \right|
 \le |a_{\floor{x}}| + |a_{\floor{y}}|.
\end{equation}
By applying~\reff{eq:DiscreteIntegral}, using \reff{eq:SojournGrowth}, and performing a change of
variables, we see that
\begin{align*}
 \sum_{k\in I_n} \omega_k^{-1} h_n(k)
 & = \int_{x \tilde\sigma \sqrt n}^{y \tilde\sigma \sqrt n} \omega_{k_n + \floor{u}}^{-1} h_n(k_n + \floor{u}) \,du  + O(n^{\lambda-1/2}) \\
 & = \tilde\sigma\sqrt n \int_x^y \omega^{-1}_{k_n + \floor{\tilde\sigma t \sqrt{n}}} h_n(k_n + \floor{\tilde\sigma t \sqrt{n}}) \, dt  + o(1),
\end{align*}
where $O(n^{\lambda-1/2}) = o(1)$ due to $\lambda < 1/2$. Further,   $| t^2 - \floor{t}^2 | \le 2
|t|$ shows that
\begin{align*}
 h_n(k_n + \floor{\tilde\sigma t \sqrt{n}})
 &= (2\pi \tilde\sigma^2 n)^{-1/2} \exp\left( - \frac{\floor{t \tilde\sigma \sqrt n}^2}{2n\tilde\sigma^2} \right) \\
 &= (2\pi \tilde\sigma^2 n)^{-1/2} e^{-t^2/2} O\bigl(e^{|t|/(\tilde\sigma\sqrt{n})}\bigr) \\
 &= (2\pi \tilde\sigma^2 n)^{-1/2} e^{-t^2/2}(1 + O(n^{-1/2})),
\end{align*}
because we assume that $t \in (x,y)$.

The next lemma finishes the proof by showing that
\[
 \frac{1}{\sqrt{2\pi}} \int_x^y (\omega_{k_n + \floor{\tilde\sigma t \sqrt{n}}}^{-1} / \mu) \, e^{-t^2/2} \, dt
 \to
 \frac{1}{\sqrt{2\pi}} \int_x^y e^{-t^2/2} \, dt.
\]
The intuition is that, restricting integration to sufficiently small subintervals of $(x,y)$ and making $n$ large, $e^{-t^2/2}$ is virtually constant, while the fluctuations of $\omega_{k_n + \floor{\tilde\sigma t \sqrt{n}}}^{-1}$ average out in the integral.
\begin{lem}\label{lem:weak_conv}
On the interval $(x,y)$, the probability measures defined by $\mathrm{m}_n(dt) = Z_n^{-1} \mu^{-1}
\omega_{k_n + \floor{\tilde\sigma t \sqrt{n}}}^{-1} \, dt$ converge weakly to the uniform probability
measure $\mathrm{m}(dt)=(y-x)^{-1}\,dt$ and the normalizing factor $Z_n \to y-x$.
\end{lem}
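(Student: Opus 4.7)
The plan is to reduce everything to a moving-average estimate for the environment by a Riemann-sum rewriting, and then invoke hypothesis \eqref{eq:EnvMovingAverages} on the fluctuations of partial sums of $\omega_\ell^{-1}$. Specifically, for any subinterval $[a,b]\subset[x,y]$ I set
\[
 Z_n(a,b) \defas \mu^{-1}\int_a^b \omega^{-1}_{k_n+\floor{\tilde\sigma t\sqrt n}}\,dt,
\]
so that $Z_n=Z_n(x,y)$ and $\mathrm{m}_n([a,b])=Z_n(a,b)/Z_n$. Weak convergence of the $\mathrm{m}_n$ to the uniform law on $(x,y)$ will follow at once, by Portmanteau, from $Z_n\to y-x$ combined with the companion statement $Z_n(a,b)\to b-a$ for every $[a,b]\subset[x,y]$.

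The key step is the asymptotic $Z_n(a,b)\to b-a$. First I would perform the change of variables $u=\tilde\sigma t\sqrt n$, giving
\[
 Z_n(a,b)=\mu^{-1}(\tilde\sigma\sqrt n)^{-1}\int_{a\tilde\sigma\sqrt n}^{b\tilde\sigma\sqrt n}\omega^{-1}_{k_n+\floor{u}}\,du.
\]
The integrand is piecewise constant on unit intervals, so the integral differs from the integer sum
\[
 S_n(a,b)\defas\sum_{j=\floor{a\tilde\sigma\sqrt n}}^{\floor{b\tilde\sigma\sqrt n}-1}\omega^{-1}_{k_n+j}
\]
by at most $2\max_{|j|\le b\tilde\sigma\sqrt n}\omega^{-1}_{k_n+j}=O(n^\lambda)=o(\sqrt n)$ by hypothesis \eqref{eq:SojournGrowth} together with $k_n\asymp n$ (which follows from \eqref{eq:SojournMean} and Lemma~\ref{lem:InverseMuLimit}). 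Now, since $k_n\to\infty$ and the range of $j$ is $O(\sqrt n)$, for any fixed $u>0$ one has $|j|\le u\,b(k_n)$ eventually; hence hypothesis \eqref{eq:EnvMovingAverages} applied at positions $j=\floor{a\tilde\sigma\sqrt n}$ and $j=\floor{b\tilde\sigma\sqrt n}$ yields
\[
 S_n(a,b)=\mu\bigl(\floor{b\tilde\sigma\sqrt n}-\floor{a\tilde\sigma\sqrt n}\bigr)+o(n^{1/2-\lambda}).
\]
Plugging this back and using $\floor{b\tilde\sigma\sqrt n}-\floor{a\tilde\sigma\sqrt n}=(b-a)\tilde\sigma\sqrt n+O(1)$, I obtain $Z_n(a,b)=b-a+o(n^{-\lambda})+o(1)$, which is $b-a+o(1)$.

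Specializing to $(a,b)=(x,y)$ gives $Z_n\to y-x$, and the general estimate gives $\mathrm{m}_n([a,b])\to(b-a)/(y-x)$ for every $[a,b]\subset[x,y]$, which is enough for weak convergence to the uniform law since the latter assigns no mass to boundaries of subintervals. The only delicate point is checking that the regime of \eqref{eq:EnvMovingAverages} covers the window we need, but this is automatic because that hypothesis permits $|j|$ up to $ub(k_n)\sim u\sqrt{\mu^{-1}n\log n}$, which dominates the range $O(\sqrt n)$ appearing in $S_n(a,b)$.
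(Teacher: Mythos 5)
Your proposal is correct and follows essentially the same route as the paper's proof: a change of variables plus the Riemann-sum comparison (with boundary terms controlled by $\omega_k^{-1}=O(k^\lambda)$, $\lambda<1/2$), then hypothesis \eqref{eq:EnvMovingAverages} applied around $k_n\asymp n$ to replace $\sum_{k}\omega_k^{-1}$ over the $O(\sqrt n)$-window by $\mu$ times its length up to $o(n^{1/2-\lambda})$. The only cosmetic difference is that you verify convergence on all subintervals $[a,b]$ while the paper checks the distribution function $\mathrm{m}_n((x,s])$; both suffice for the weak convergence claim.
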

\begin{proof}
We must show that $\mathrm{m}_n((x,s])\to \mathrm{m}((x,s])$ for all $s\in(x,y)$. By a change of
variables and \reff{eq:DiscreteIntegral} we see that
\beqn
 \int_x^s \omega_{k_n + \floor{\tilde\sigma t \sqrt{n}}}^{-1} \, dt
 = \tilde\sigma^{-1} n^{-1/2} \sum_{k\in I_n(x,s)} \omega_k^{-1} + o(1),
\eeqn
because $\omega_k^{-1} = O(k^\lambda)$ and $\lambda < \frac12$. The center of $I_n(x,s)$ is $c_n=
\mu^{-1}n(1+o(1))$. For $u>0$ large enough and $b(n)$ as in~\eqref{eq:EnvMovingAverages},
$|I_n(x,s)| = \floor{s\tilde\sigma \sqrt n}-\floor{x \tilde\sigma \sqrt n} < u b(n)$. Therefore,
\eqref{eq:EnvMovingAverages} implies that
\[
 n^{-1/2} \!\!\!\! \sum_{k\in I_n(x,s)} \omega_k^{-1}
 = n^{-1/2} |I_n(x,s)| \mu + o(1).
\]
Because $n^{-1/2} |I_n(x,s)| \to (s-x) \tilde\sigma$, we get $\mu^{-1} \int_x^s \omega_{k_n + \floor{\tilde\sigma
t \sqrt{n}}}^{-1} \, dt \to s-x$, and $Z_n \to y-x$ follows by taking $s=y$. This finishes the proof.
\end{proof}

\section{Proofs for quenched random environments}
\label{sec:ProofsRandom}

In this section we prove Theorem~\ref{thm:Quenched} for the random walk $(X_n)$ in a quenched
random environment satisfying the assumptions \reff{eq:EnvMoments}--\reff{eq:EnvMixing}.
Section~\ref{sec:StationarySequences} contains some preliminary facts on the growth rate and
moving averages of stationary sequences, and the proof of Theorem~\ref{thm:Quenched} is given in
Section~\ref{sec:ProofOfQuenchedLLT}.

\subsection{Growth rate and moving averages of stationary sequences}
\label{sec:StationarySequences}

The following result establishes a bound on the growth rate of a stationary sequence in terms of
its moments.

\begin{lem}
\label{lem:GrowthRate}
Let $(\xi_0,\xi_1,\dots)$ be a stationary random sequence such that $\Ee |\xi_0|^q < \infty$ for
some $q > 0$. Then $\xi_k = o(k^{1/q})$ almost surely.
\end{lem}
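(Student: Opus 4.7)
The plan is to use the first Borel--Cantelli lemma (which requires no independence, so stationarity alone is enough) together with the standard tail-sum identity $\sum_{k\ge 1}\pre(Y\ge k)\le \Ee Y$ valid for any nonnegative random variable $Y$.

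First I would fix an arbitrary $\epsilon>0$. By stationarity, $\pre(|\xi_k|>\epsilon k^{1/q})=\pre(|\xi_0|>\epsilon k^{1/q})$ for every $k\ge 1$. Summing and applying the tail-sum bound to the nonnegative variable $Y=|\xi_0|^q/\epsilon^q$, I get
\[
 \sum_{k\ge 1}\pre(|\xi_k|>\epsilon k^{1/q})
 =\sum_{k\ge 1}\pre\!\bigl(|\xi_0|^q/\epsilon^q>k\bigr)
 \le \epsilon^{-q}\,\Ee|\xi_0|^q < \infty.
\]
By the first Borel--Cantelli lemma this gives
\[
 \pre\!\left(|\xi_k|>\epsilon k^{1/q}\ \text{infinitely often}\right)=0,
\]
hence $\limsup_{k\to\infty} k^{-1/q}|\xi_k|\le \epsilon$ almost surely.

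Finally I would intersect the full-measure events corresponding to a countable sequence $\epsilon_m\downarrow 0$ to conclude $\limsup_{k\to\infty} k^{-1/q}|\xi_k|=0$ almost surely, which is exactly $\xi_k=o(k^{1/q})$ a.s. There is no real obstacle here; the only thing to keep in mind is that Borel--Cantelli's first lemma does not require any independence or mixing assumption on the sequence, so it applies under the bare stationarity hypothesis.
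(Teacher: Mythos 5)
Your proof is correct and follows essentially the same route as the paper: stationarity reduces each tail probability to one involving $\xi_0$, the tail-sum bound $\sum_{k\ge 1}\pre(Y>k)\le \Ee Y$ (the paper phrases this via Fubini as $\Ee\lfloor(\epsilon^{-1}|\xi_0|)^q\rfloor<\infty$) makes the series converge, and the first Borel--Cantelli lemma plus a countable sequence $\epsilon_m\downarrow 0$ gives the almost sure conclusion. No gaps.
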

\begin{proof}
Fix $\epsilon>0$. By stationarity and Fubini's theorem we see that
\[
 \sum_{k=0}^\infty \pre( k^{-1/q} |\xi_k| > \epsilon)
 = \sum_{k=0}^\infty \pre( (\epsilon^{-1} |\xi_0|)^q > k)
 = \Ee \floor{(\epsilon^{-1} |\xi_0|)^q}
 < \infty,
\]
where $\floor{x}$ denotes the integer part of $x$. Because $\epsilon$ was arbitrarily chosen, the
claim follows by the  Borel--Cantelli lemma.
\end{proof}

The next result analyzes the moving averages of a stationary random sequence in terms of its
moments and mixing rate. The result, proven with the help Peligrad's law of large numbers
\cite{Peligrad1985} (see also Bingham~\cite{Bingham1986a} for a nice survey), is similar in spirit
to Kiesel~\cite[Thm 1]{Kiesel1998}, but tailored to fit our needs.

\begin{lem}
\label{lem:fast}
Let $(\xi_0,\xi_1,\dots)$ be a stationary random sequence such that $\Ee |\xi_0|^q < \infty$ for
some $q \ge 1$, and for which the mixing coefficients as defined in~\reff{eq:MixingCoefficients}
satisfy
\begin{equation}
 \label{eq:PeligradMixing}
 \sum_{n=1}^\infty \phi^{1/\kappa}(2^n) < \infty
 \quad \text{for some $\kappa \ge 2$}.
\end{equation}
Then
\begin{equation}
 \label{eq:fast}
 \max_{1 \le j \le u k^s} \left| \sum_{\ell=k+1}^{k+j} (\xi_\ell - \Ee \xi_0) \right| = o(k^r)
\end{equation}
almost surely for any $s>0$, $u>0$, and
\begin{equation}
 \label{eq:AssumptionOnR}
 r \ge \max \left( \frac{1+s}{q}, \ \frac{s}{2} + \frac{1}{\kappa-1} \right).
\end{equation}
\end{lem}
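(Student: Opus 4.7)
The plan is a direct Borel--Cantelli argument combined with Peligrad's $\phi$-mixing extension of the Baum--Katz theorem \cite{Peligrad1985}. Writing $T_j = \sum_{i=1}^{j}(\xi_i - \Ee\xi_0)$, stationarity of $(\xi_\ell)$ gives
\[
 \pre\!\left( \max_{1 \le j \le u k^s} \Big|\sum_{\ell=k+1}^{k+j}(\xi_\ell - \Ee \xi_0)\Big| > \epsilon k^r \right)
 \,=\, \pre\!\left( \max_{1 \le j \le N_k} |T_j| > \epsilon k^r \right),
\]
with $N_k = \floor{u k^s}$, so the almost sure statement \reff{eq:fast} follows once one verifies, for every $\epsilon > 0$, the summability
\[
 \sum_{k \ge 1} \pre\!\left( \max_{1 \le j \le N_k} |T_j| > \epsilon k^r \right) < \infty.
\]

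To establish this summability, I would invoke Peligrad's theorem, which under the moment hypothesis $\Ee|\xi_0|^q < \infty$ and the mixing assumption $\sum_n \phi^{1/\kappa}(2^n) < \infty$ delivers a Baum--Katz-type tail bound of the shape $\pre(\max_{j \le N}|T_j| > \epsilon N^\alpha) \le C_\epsilon N^{-\beta(\alpha)}$, subject to two independent lower bounds on $\alpha$: a moment constraint ($q\alpha > 1$, inherited from the classical Baum--Katz threshold) and a mixing constraint forcing $\alpha$ to exceed the exponent that governs the $\phi$-mixing variance growth. Writing $k^r = N_k^{r/s}$ and changing variables $N = N_k \asymp k^s$ in the Borel--Cantelli sum introduces a Jacobian factor $dk/dN \sim N^{(1-s)/s}$; the resulting geometric series converges precisely when $r$ strictly exceeds both the moment threshold $(1+s)/q$ and the mixing threshold $s/2 + 1/(\kappa-1)$, which is exactly the hypothesis \reff{eq:AssumptionOnR}.

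The main obstacle is matching Peligrad's moment/maximal inequality for $\phi$-mixing sequences to this parametrisation cleanly. Concretely, I would truncate each $\xi_\ell$ at a level of order $k^r$: the tail of the truncation is handled by $\pre(|\xi_0| > k^r) \le k^{-rq} \Ee|\xi_0|^q$ together with a union bound over the $u k^s$ summands, giving summability under $r \ge (1+s)/q$, while the bounded part is controlled via Peligrad's $L^\kappa$ maximal inequality for stationary $\phi$-mixing sequences, which, thanks to \reff{eq:PeligradMixing}, yields a variance-type estimate whose Chebyshev tail is summable under $r \ge s/2 + 1/(\kappa - 1)$. Balancing the truncation error with the mixing-driven fluctuation bound is the technical heart of the proof, and the two entries in the maximum in \reff{eq:AssumptionOnR} correspond exactly to the two sides of this trade-off.
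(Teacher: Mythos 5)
Your opening reduction (stationarity plus Borel--Cantelli, so that it suffices to show $\sum_{k\ge 1}\pre\bigl(\max_{1\le j\le uk^s}|S_j|>\epsilon k^r\bigr)<\infty$ for every $\epsilon>0$, with $S_j=\sum_{\ell=1}^j(\xi_\ell-\Ee\xi_0)$) is exactly the paper's first step, and your Jacobian exponent $(1-s)/s$ is the right one. The gap is in how you use Peligrad. Her result \cite{Peligrad1985} is a Baum--Katz-type statement about a \emph{weighted series}: it asserts $\sum_n n^{\alpha p-2}\pre\bigl(\max_{1\le j\le n}|S_j|>\epsilon n^{\alpha}\bigr)<\infty$ under parameter conditions, not a termwise polynomial tail bound $\pre(\max_{j\le N}|T_j|>\epsilon N^{\alpha})\le C_\epsilon N^{-\beta(\alpha)}$, which is what your second paragraph plugs into the Borel--Cantelli sum. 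Nor can such a bound be extracted at the needed strength: convergence of a nonnegative series only gives the termwise estimate $\pre(\max_{1\le j\le N}|S_j|>\epsilon N^{\alpha})=o(N^{2-\alpha p})$, and with the relevant parameters $\alpha=r/s$, $p=(1+s)/r$ (so $\alpha p-2=1/s-1$) this makes your reweighted summand $N^{(1-s)/s}\cdot o(N^{-(1/s-1)})=o(1)$, which is not summable --- the ``geometric series'' step fails. The paper's proof avoids this entirely: after comparing the sum with an integral and changing variables $v=ut^s$, the Borel--Cantelli sum is shown to be finite \emph{if and only if} the weighted series $\sum_n n^{\alpha p-2}\pre\bigl(\max_{1\le j\le n}|S_j|>\epsilon_1 n^{\alpha}\bigr)$ converges, with $\alpha=r/s$, $p=(1+s)/r$, $\epsilon_1=u^{-r/s}\epsilon$; one then checks that \reff{eq:AssumptionOnR} is precisely equivalent to Peligrad's hypotheses $p\le q$, $\alpha p>1$, $\alpha>1/2$ and $(\alpha p-1)/(\alpha-1/2)+1\le\kappa$ (this last inequality is where the threshold $s/2+1/(\kappa-1)$ comes from), so her Theorem~2 applies verbatim, including the boundary case of equality in \reff{eq:AssumptionOnR}.

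Your fallback truncation argument does not close this gap. The union bound over the $\asymp uk^s$ summands requires $rq-s>1$, i.e.\ the strict inequality $r>(1+s)/q$, and the Chebyshev step for the bounded part likewise produces only strict thresholds (indeed an $L^{\kappa}$ maximal bound of order $N^{\kappa/2}$ would give $r>s/2+1/\kappa$, not the stated $s/2+1/(\kappa-1)$); since the lemma is claimed for $r\ge\max(\cdot)$, and $o(k^{r'})$ for all $r'>r$ does not imply $o(k^{r})$, the equality case is genuinely lost. More importantly, the maximal moment inequality for $\phi$-mixing sums of variables truncated at the growing level $k^r$, with an exponent tied to $\kappa$ via \reff{eq:PeligradMixing}, is exactly the technical content of Peligrad's paper and is not supplied in your sketch. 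The efficient route is to keep your first paragraph and then use Peligrad's theorem in its weighted-series form through the change of variables described above.
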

\begin{proof}
Denote the left side of~\reff{eq:fast} by $M_k$, and fix an arbitrary $\epsilon > 0$. Note that by
stationarity,
\begin{equation}
 \label{eq:fast1}
 \sum_{k=1}^\infty \pre(k^{-r} M_k > \epsilon)
 = \sum_{k=1}^\infty \pre\Bigl( \max_{1 \le j \le u k^s} |S_j| > \epsilon k^r\Bigr),
\end{equation}
where $S_j = \sum_{\ell=1}^j (\xi_\ell- \Ee \xi_0)$. The right side in~\reff{eq:fast1} is finite
if and only if
\[
 \int_1^\infty \pre\Bigl( \max_{1 \le j \le u t^{s}} |S_j| > \epsilon t^{r}\Bigr) \, dt < \infty,
\]
and by a change of variables, the above is further equivalent to
\[
 \int_1^\infty t^{1/s-1} \pre\Bigl( \max_{1 \le j \le t} |S_j| > \epsilon_1 t^{r/s}\Bigr) \, dt < \infty,
\]
where $\epsilon_1 = u^{-r/s} \epsilon$. We conclude that $\sum_{k=1}^\infty \pre(k^{-r} M_k >
\epsilon)$ is finite if and only if
\begin{equation}
 \label{eq:PeligradSum}
 \sum_{n=1}^\infty n^{\alpha p - 2} \pre\Bigl( \max_{1 \le j \le n} |S_j| > \epsilon_1 n^{\alpha}\Bigr) < \infty,
\end{equation}
where $\alpha = r/s$ and $p = (1+s)/r$.

The parameters $r$ and $s$ have been transformed into $\alpha$ and $p$ to conform with the
notations used in~\cite{Peligrad1985}. Observe that $\alpha p > 1$, and moreover,
\reff{eq:AssumptionOnR} implies that $p \le q$, $\alpha
> 1/2$, and $[(\alpha p-1)/(\alpha-1/2)] + 1 \le \kappa$. Hence, Peligrad's law of large
numbers \cite[Thm 2]{Peligrad1985} yields the validity of~\reff{eq:PeligradSum}, and therefore,
$\sum_{k=1}^\infty \pre(k^{-r} M_k > \epsilon)$ is finite. Because $\epsilon$ was arbitrarily
chosen, the claim follows by the Borel--Cantelli lemma.
\end{proof}

\subsection{Proof of Theorem~\ref{thm:Quenched}}
\label{sec:ProofOfQuenchedLLT}

By virtue of Theorems~\ref{thm:LLN}--\ref{thm:CLT}, we only need to verify that the regularity
conditions \reff{eq:SojournGrowth} -- \reff{eq:EnvMovingAverages} are valid for $\pre$-almost
every realization of the random environment.

Recall that $\Ee \omega_0^{-q} < \infty$ for some $q>5$ by~\reff{eq:EnvMoments}. Consequently,
Lemma~\ref{lem:GrowthRate} implies that $\omega_k^{-1} = o(k^{1/q})$, so that especially,
\reff{eq:SojournGrowth} holds with $\lambda = 1/q$. Because $q>5$, the random variables $m_k =
\omega_k^{-1}$ and $s^2_k = (1-\omega_k)\omega_k^{-2}$ have finite second moments, so the
assumption~\reff{eq:EnvMixing} on the mixing coefficients implies that the stationary random
sequences $(m_k)$ and $(s^2_k)$ satisfy the law of the iterated logarithm \cite[Section
12]{Bingham1986}. As a consequence, \reff{eq:SojournMean} and \reff{eq:SojournVariance} are valid
$\pre$-almost surely with $\mu = \Ee m_0 > 1$ and $\sigma^2 = \Ee s_0^2 > 0$.

Because $\Ee \omega_0^{-3}$ is finite, the pointwise ergodic theorem implies that \linebreak $k^{-1}
\sum_{j=0}^{k-1} \omega_k^{-3}$ converges, so that \reff{eq:SojournThirdMoments} holds.

To verify \reff{eq:EnvMovingAverages}, note that because $\phi$ is decreasing, the sum in
\reff{eq:PeligradMixing} equals $\int_1^\infty \phi^{1/\kappa}(2^{\floor{x}}) \, dx \le
\int_1^\infty \phi^{1/\kappa}(\floor{2^{x-1}}) \, dx = (\log 2)^{-1}\int_1^\infty
\phi^{1/\kappa}(\floor{y}) y^{-1} \, dy$. Using H\"older's inequality, the last integral can be
bounded from above by $\left(\int_1^\infty \phi^{1/2}(\floor{y}) \,dy \right)^{2/\kappa}
\left(\int_1^\infty y^{-p}\,dy\right)^{1/p}$ with $p=(1-2/\kappa)^{-1}$, which is finite due to
\reff{eq:EnvMixing}. Therefore, condition \reff{eq:PeligradMixing} of Lemma~\ref{lem:fast} holds
for any $\kappa > 2$. Next, because $q>5$, we may choose an exponent $s > 1/2$ such that $s < 1 -
2q^{-1}$ and $s \le q/2 - 2$. Our choice of $s$ implies that the exponent $r = 1/2 - q^{-1}$
satisfies~\reff{eq:AssumptionOnR} for some large enough $\kappa$. Hence by Lemma~\ref{lem:fast}, it
follows that
\[
 \max_{j:|j| \le u k^s}
 \left| \sum_{\ell=k}^{k+j-1}(\omega_\ell^{-1}-\mu) \right| = o(k^r)
\]
almost surely for all $u>0$. Because $(k\log k)^{1/2} = o(k^s)$, it follows that
\reff{eq:EnvMovingAverages} holds with $\lambda = 1/q$.
\qed

\section{Conclusions}
\label{sec:Conclusions}

We studied the propagation of a particle in a one-dimensional inhomogeneous medium, where the
motion is induced by chaotic and fully deterministic local rules, and the initial condition is the
sole source of randomness. The spatially varying local rules constitute an environment, which is
frozen during the particle's lifetime. This model falls into the framework of extended dynamical
systems which lack physically observable invariant measures. Defining the local rules via
piecewise affine maps allows to reduce the model to a simple unidirectional random walk on the
integers.

The main result of the paper shows that the probability mass function of the random walk
approaches a modulated Gaussian density, where the modulating factor is explicitly given in terms
of the local properties of the environment. In contrast, when looking at the walk over a coarser
diffusive space scale, the non-Gaussian modulating factor averages out asymptotically, and the
distribution of the walk approaches a standard Gaussian distribution.

Although our analysis is restricted to a special instance of a random walk, we believe that the
obtained results could serve as useful benchmarks when testing hypotheses concerning more general
extended dynamical systems and random walks in random environments.

\appendix

\section{A generalized inverse}
\label{sec:GeneralizedInverse}

Let $(a(k))_{k=0}^\infty$ be an increasing sequence such that $a(0) = 0$ and \linebreak $\lim_{k\to\infty}
a(k) = \infty$, and define its generalized inverse by
\begin{equation}
  \label{eq:InverseMu}
  \ainv(n) = \min\{k \in \Z_+: a(k) \ge n\}.
\end{equation}
Then also $\ainv(0) = 0$ and $\lim_{n\to\infty} \ainv(n) = \infty$. The following result
summarizes some basic properties of the inverse.

\begin{lem}
\label{lem:InverseMuLimit}
If $\lim_{k \to \infty} a(k)/k = \mu$, then $\lim_{n \to \infty} \ainv(n)/n = 1/\mu$.
\end{lem}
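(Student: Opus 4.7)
The plan is to squeeze $a^\leftarrow(n)$ between two quantities both controlled by the hypothesis $a(k)/k \to \mu$, via the defining inequalities of the generalized inverse. I assume implicitly that $\mu \in (0,\infty)$, which is the case relevant to the paper (the application has $\mu > 1$).

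Write $k_n = a^\leftarrow(n)$. The first observation is that $k_n \to \infty$ as $n \to \infty$: if $k_n$ stayed bounded along some subsequence, then $a(k_n)$ would also be bounded along that subsequence, contradicting $a(k_n) \ge n \to \infty$. The second observation is the basic sandwich provided by the definition \reff{eq:InverseMu}: for $n \ge 1$ we have $k_n \ge 1$ (since $a(0) = 0 < n$), and the minimality of $k_n$ together with the monotonicity of $a$ give
\[
 a(k_n - 1) \, < \, n \, \le \, a(k_n).
\]

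Dividing by $k_n$ rearranges this into
\[
 \frac{a(k_n - 1)}{k_n - 1} \cdot \frac{k_n - 1}{k_n} \, < \, \frac{n}{k_n} \, \le \, \frac{a(k_n)}{k_n}.
\]
Since $k_n \to \infty$, the hypothesis $a(k)/k \to \mu$ implies that both $a(k_n)/k_n$ and $a(k_n - 1)/(k_n - 1)$ tend to $\mu$, while $(k_n - 1)/k_n \to 1$. Hence $n/k_n \to \mu$, and taking reciprocals gives $k_n/n \to 1/\mu$, which is the claim.

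I do not expect any serious obstacle: the only slightly delicate point is verifying $k_n \to \infty$ before invoking the limit along the subsequence $(k_n)$, and ensuring that $k_n \ge 1$ so that $a(k_n - 1)$ is defined. Both are handled by the standing assumptions $a(0) = 0$ and $a(k) \to \infty$.
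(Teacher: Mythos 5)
Your proof is correct and follows essentially the same route as the paper: the sandwich $a(\ainv(n)-1) < n \le a(\ainv(n))$, dividing by $\ainv(n)$, and using $\ainv(n)\to\infty$ to pass to the limit. The extra care you take in checking $k_n\to\infty$ and $k_n\ge 1$ is exactly what the paper relies on implicitly via its standing assumptions.
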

\begin{proof}
By definition, $a(\ainv(n)-1) < n \le a(\ainv(n))$ for all $n$ such that $\ainv(n) > 0$, so that
\[
 \frac{a(\ainv(n)-1)}{\ainv(n)} < \frac{n}{\ainv(n)} \le \frac{a(\ainv(n))}{\ainv(n)}.
\]
Because $\ainv(n) \to \infty$, it follows that $a(\ainv(n))/\ainv(n) \to \mu$. Therefore, the
above bounds imply $n/\ainv(n) \to \mu$, and the proof is complete.
\end{proof}

\begin{lem}
\label{lem:InverseMuBound}
Assume that $c = \inf_k (a(k+1) - a(k)) > 0$. Then for all positive integers $k$ and $n$,
\begin{equation}
 \label{eq:InverseMuBound1}
 |a(k) - n| \ge c(|k-\ainv(n)|-1),
\end{equation}
and
\begin{equation}
 \label{eq:InverseMuBound2}
 \ainv(n+1) - \ainv(n) \le \ceil{ c^{-1}}.
\end{equation}
\end{lem}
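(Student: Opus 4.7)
The plan is to treat the two inequalities separately, both by straightforward case analysis exploiting the uniform lower bound $a(k+1)-a(k)\ge c$.

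For \eqref{eq:InverseMuBound1}, I would first record the defining inequalities of the generalized inverse: since $n\ge 1$ we have $\ainv(n)\ge 1$, and by \eqref{eq:InverseMu},
\[
 a(\ainv(n)-1) \;<\; n \;\le\; a(\ainv(n)).
\]
Then split into two cases. If $k\ge \ainv(n)$, monotonicity gives $a(k)\ge n$, and iterating the lower bound on increments yields $a(k)-a(\ainv(n))\ge c(k-\ainv(n))$, so that
\[
 |a(k)-n| \;=\; a(k)-n \;\ge\; a(k)-a(\ainv(n)) \;\ge\; c(k-\ainv(n)) \;=\; c\,|k-\ainv(n)|,
\]
which is even stronger than required. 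If $k\le \ainv(n)-1$, the same argument applied to $a(\ainv(n)-1)-a(k)\ge c(\ainv(n)-1-k)$ combined with $n-a(\ainv(n)-1)>0$ gives
\[
 |a(k)-n| \;=\; n-a(k) \;\ge\; a(\ainv(n)-1)-a(k) \;\ge\; c\bigl(\ainv(n)-k-1\bigr) \;=\; c\bigl(|k-\ainv(n)|-1\bigr),
\]
and the two cases together prove \eqref{eq:InverseMuBound1}.

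For \eqref{eq:InverseMuBound2}, the plan is simply to exhibit an index at which $a$ has already jumped past $n+1$. Set $k=\ainv(n)$ and $\ell = k+\ceil{c^{-1}}$. Telescoping the increments,
\[
 a(\ell)-a(k) \;\ge\; c\,\ceil{c^{-1}} \;\ge\; 1,
\]
so $a(\ell)\ge a(k)+1\ge n+1$, and by the minimality in the definition of $\ainv(n+1)$ we conclude $\ainv(n+1)\le \ell = \ainv(n)+\ceil{c^{-1}}$.

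Neither step presents any real obstacle; the only subtlety is keeping the boundary case $\ainv(n)\ge 1$ in mind when invoking $a(\ainv(n)-1)<n$ in the first part. The proof is essentially bookkeeping once the two defining inequalities of $\ainv$ and the bound $a(k+1)-a(k)\ge c$ are in hand.
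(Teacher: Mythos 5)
Your proposal is correct and follows essentially the same argument as the paper: the same two-case analysis around $\ainv(n)$ using the defining inequalities $a(\ainv(n)-1)<n\le a(\ainv(n))$ together with the increment bound for \eqref{eq:InverseMuBound1}, and the same telescoping of $\ceil{c^{-1}}$ increments past $n$ for \eqref{eq:InverseMuBound2}. No issues to report.
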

\begin{proof}
If $k \ge \ainv(n)$, then $a(k) - n \ge a(k) - a(\ainv(n)) \ge c(k-\ainv(n))$. If $k < \ainv(n)$,
then $n - a(k) > a(\ainv(n)-1) - a(k) \ge c(\ainv(n) - k - 1)$. Hence
\reff{eq:InverseMuBound1} follows.

To prove \reff{eq:InverseMuBound2}, fix a positive integer $n$, and denote $\ell = \ainv(n)$. Then
$a(\ell) \ge n$ and $a(\ell + \ceil{c^{-1}}) - a(\ell) \ge c \ceil{c^{-1}} \ge 1$, so that $a(\ell
+ \ceil{c^{-1}}) \ge n+1$. Hence, $\ainv(n+1) \le \ell + \ceil{c^{-1}}$.
\end{proof}

\section*{Acknowledgements}

We thank Timo Sepp\"al\"ainen, Esko Valkeila, Bastien Fernandez, and Milton Jara for fruitful
discussions. We are indebted to Tapio Simula, whose simulations motivated our work. We also thank the
anonymous referees for sharp remarks and helpful comments on improving the presentation of this
paper. Both authors have been supported by a fellowship from the Academy of Finland.

\bibliographystyle{abbrv}
\bibliography{RelatedWork}

\end{document}